\date{\scriptsize   Received: , Accepted: .}
\newtheorem{theorem}{Theorem}[section]
\newtheorem{proposition}[theorem]{Proposition}
\newtheorem{lemma}[theorem]{Lemma}
\newtheorem{corollary}[theorem]{Corollary}
\theoremstyle{definition}
\newtheorem{definition}[theorem]{Definition}
\newtheorem{example}[theorem]{Example}
\theoremstyle{remark}
\numberwithin{equation}{section}
\begin{document}

 
\title[Some notes on $b$-weakly compact operators]{Some notes on $b$-weakly compact operators} 
\author[M.  Mousavi Amiri]{Masoumeh Mousavi Amiri}
\address[Masoumeh Mousavi Amiri]{Department of Mathematics, 
University of Mohaghegh  Ardabili, 
Ardabil, Iran.}
\email{masoume.mousavi@gmail.com}
\author[K. Haghnejad Azar]{Kazem Haghnejad Azar $^*$}
\address[Kazem Haghnejad Azar]{Department of Mathematics, 
University of Mohaghegh  Ardabili, 
Ardabil, Iran.}
\email{haghnejad@uma.ac.ir}


  \thanks{$^*$Corresponding author}
%
 
 \maketitle
%

\begin{abstract}
In this paper, we will study some properties of b-weakly compact operators and we will investigate their relationships to some variety of operators on the normed vector lattices. With some new conditions, we show that the modulus  of an operator $T$ from Banach lattice $E$ into Dedekind complete Banach lattice $F$ exists and is $b$-weakly operator whenever $T$ is a $b$-weakly compact operator. 
We show that every Dunford-Pettis operator from a Banach lattice $E$ into a Banach space $X$ is b-weakly compact,  and the converse  holds whenever  $E$ is an $AM$-space or the norm of $E^\prime$ is order continuous and $E$ has the Dunford-Pettis property. We also show that each order bounded operator from a Banach lattice into a $KB$-space admits a $b$-weakly compact modulus.\\
\textbf{Keywords:}  Banach lattice, order continuous norm, b-weakly compact operator, Dunford-Pettis operator.  \\
\textbf{MSC(2010):}  Primary 46B42; Secondary 47B60.
\end{abstract}

\section{Introduction}
An operator $T$ from a Banach lattice $E$ into a Banach space $X$ is said to be b-weakly compact, if it maps each subset of $E$ which is b-order bounded (i.e. order bounded in the topological bidual $E^{\prime\prime}$) into a relatively weakly compact subset of $X$.  Note that in \cite{2}, the class of b-weakly compact operators is introduced by  Alpay-Altin-Tonyali  and several interesting characterizations were given in \cite{2,4,9,10}.   After that, a series of papers which gave different characterizations of this class of operators were published \cite{2,3,4,5,6,7,8,9,10}. The most beautiful property of the class of b-weakly compact operators is that it satisfies the domination property as proved in \cite{2}.  But one of shortcomings of this class is that the modulus of a $b$-weakly compact operator need not be $b$-weakly compact. Note that each weakly compact operator is b-weakly compact, but the converse is not true in general. In \cite{8}, authors  characterized Banach lattices on which each b-weakly compact operator is weakly compact. In \cite{10},  authors proved that an operator $T$ from a Banach lattice $E$ into a Banach space $X$ is $b$-weakly compact if and only if $(Tx_n)$  is norm convergent for every positive increasing sequence $(x_n)$ of the closed unit ball  $B_E$ of $E$. 
\subsection{Some basic definitions}
 An element $e>0$ in a Riesz  space $E$ is said to be an order unit whenever for each $x\in E$ there exists some $\lambda >0$ with $|x| \leq \lambda e$.  For example $\ell^\infty$ has order unit, but $c_0$ has not.   A sequence $(x_n)$ in a vector lattice is said to be disjoint whenever $|x_n| \wedge |x_m| =0$ holds for $n\neq m$.   Let $E$ be a Riesz space. The subset $ E^+=\{x\in E:x\geqslant0\}$  is called the positive cone of $E$ and the elements of $E^+$ are called the positive elements of $E$. For an operator $T:E\rightarrow F$ between two Riesz spaces we shall say that its modulus $|T|$ exists ( or that $T$ possesses a modulus) whenever $|T|:=T\vee (-T)$ exists in the sense that $|T|$ is the supremum of the set $\{-T,T\}$ in $\mathcal{L}(E,F)$.
An operator  $T:E\rightarrow F$ between two Riesz spaces  is called order bounded if it maps order bounded subsets of $E$ into order bounded subsets of $F$.  An operator $T: E\rightarrow F$ between two Riesz spaces  is positive if $T(x)\geq 0$ in $F$ whenever $x\geq 0$ in $E$.  Note that each positive linear mapping on a Banach lattice is continuous.  It is clear that every positive operator is order bounded, but the converse is not true in general.       
A Banach lattice $E$ has order continuous norm if $\| x_\alpha\|\rightarrow 0$ for every decreasing net $(x_\alpha)_\alpha$ with $\inf_\alpha x_\alpha=0$.  If $E$ is a Banach lattice,  its topological dual $E^\prime$, endowed with the dual norm and dual order is also a Banach lattice.  A Banach lattice $E$ is  said to be an $AM$-space if for each $x,y\in E$ such that $|x|\wedge |y|=0$,  we have $\|x+y\|= max \{\|x\|, \|y\|\}$.   A Banach lattice $E$ is said to be $KB$-space whenever each increasing norm bounded sequence of $E^+$ is norm convergent.  A subset $A$ of a Riesz space $E$ is called b-order bounded in $E$ if it is order bounded in the bidual  $E^{\thicksim\thicksim}$.      An operator $T:E\rightarrow F$ between two Banach spaces is called a Dunford-Pettis operator whenever $x_n\xrightarrow{w} 0$ implies $Tx_n\xrightarrow{\|\cdot \|} 0$. Recall that an operator $T$ from a Banach lattice $E$ into a Banach space $X$ is said to be order weakly compact, if it maps each order bounded subset of $E$ into a relatively weakly compact subset of $X$, i.e., $T[-x,x]$ is relatively weakly compact in $X$ for each $x\in E^+$. Assume that $E$ and $F$ are normed lattice.
A positive linear operator $T:E\rightarrow F$ is called almost interval preserving if $T[0,x]$ is dense in $[0,Tx]$ for every $x\in E^+$.
Let $E$ be a vector lattice. A sequence  
 $\{x_n\}_1^ \infty\subset E$
 is 
 called  order  convergent  to 
 $x$
 as 
 $n \to  \infty$
 if there exists a sequence  
 $\{y_n\}^\infty_1$
 such  that  
 $y_n \downarrow 0$
 as
 $n  \rightarrow \infty$
 and 
 $|x_n -x| \leq  y_n$
 for  all 
 $n$. We will write $x_n\xrightarrow{o_1}x$ when $\{x_n\} $ is order convergent to $x$. A sequence $\{x_n\}$ in a vector lattice  $E$ is    strongly order convergent  to  $x\in E$, denoted by $x _n \xrightarrow{o_2}x$ whenever there exists a net $\{y_\beta\}_{\beta\in \mathcal{B}}$  in $E$ such that $y_\beta \downarrow 0$ and that for every $\beta\in \mathcal{B}$, there exists $n_0$ such that  $|x_n -x| \leq y_\beta$ for all $n\geq n_0$. It is clear that every order convergent sequence is strongly order convergent, but two convergence are different, for information see, \cite{1b}. A net $ (x_{\alpha})_{\alpha}$ in Banach lattice $E$ is unbounded norm convergent (or, $un$-convergent for short) to $ x \in E $ if $ | x_{\alpha} - x | \wedge u\xrightarrow{\|\cdot \|} 0$ for all $ u \in E^{+} $.  We denote this convergence by $ x_{\alpha} \xrightarrow{un}x $. This convergence has been introduced and studied in \cite{9d, 10a}. 
For terminology concerning Banach lattice theory and positive operators, we refer the reader to the excellent books  of \cite{1,11,12}.


\section{\bf Main Results}
The collections of   b-weakly compact operators,   order weakly compact operators,  weakly compact operators and compact operators will be denoted by
 $W_b(E,X)$, $W_o(E,X)$, $W(E,X)$ and $K(E,X)$, respectively,  whenever there is not confused. We have the following relationships between these spaces:
$$K(E,X) \subseteq W(E,X)\subseteq W_b(E,X)\subseteq  W_{o}(E,X).$$
In \cite{2}, authors show that the above inclusion may be proper. 
For example, note that each weakly compact operator is a $b$-weakly operator, but the converse may be false in general. Of course the identity operator $I:L^1 [0,1]\rightarrow L^1 [0,1]$ is a $b$-weakly operator, but is not weakly compact.
Now let  $E$ be a Banach lattice such that the norm of $E^ \prime $ is order continuous and let $X$ be a Banach space. Then,  by    \cite [Theorem 2.2]{8}, it is obvious that each $b$-weakly operator $T:E\rightarrow X$ is weakly compact.\\

The next  example due to Z. L. Chen and A. W. Wickstead in  \cite{13} shows that the order bounded $b$-weakly compact operators from a Banach lattice into a Dedekind complete Banach lattice do not form a lattice, i.e., a modulus of a $b$-weakly compact operator need not be $b$-weakly compact.

\begin{example} \label{2.13}
 Let $E=C[0,1]$, $F=l_\infty (F_n)$ where $F_n=(l_\infty,\|\cdot \|_n)$ and $\|(\lambda_k)\|_n=\max\{\|(\lambda_k)\|_\infty ,n\limsup(|\lambda_k|)\}$ for all $(\lambda_k)\in l_\infty$. Then for each $n\in \mathbb{N}$, $F_n$ is a Dedekind complete $AM$-space, hence so is $F$. Define $T_n:E\rightarrow F_n$ by $T_n(f)=(2^n.\int _{I_n} f.r_kdt)_{k=1}^{\infty}\in F_n$ for all $f\in E$, where $r_n$ is the $n^,$th Rademacher function on $[0,1]$ and $I_n=(2^{-n},2^{-n+1})$.\\
Now define $T:E\rightarrow F$ by $T(f)=(\frac{1}{n}T_n(f))_{n=1}^{\infty}$. Then $T$ is a weakly compact operator. So,  $T$ is a $b$-weakly compact operator and its modulus $|T|$ exists and $|T|$ is not order weakly compact hence not $b$-weakly compact.  So,  $W_{b}(E,F)$ is not a Riesz space. 
\end{example}
Alpay  and Altin in \cite{4} show that for $b$-weakly compact operator $T$ from a Banach lattice $E$ into a Dedekind complete Banach lattice $F$,  the modulus  of $T$ exist and is $b$-weakly compact operator whenever $F$ is $AM$-space with order unit. Now in the following theorems, we show that $T\in W_b(E,F)$ whenever $\vert T\vert\in W_b(E,F)$ and with some new conditions, we show that the modulus  of $T$ exists and is $b$-weakly operator whenever $T$ is a $b$-weakly compact operator.

\begin{theorem}\label{t:2.6}
Let $E$ and $F$ be normed vector lattices. We have the following assertions.
\begin{enumerate}
\item If  $T:E\rightarrow F$ is an order bounded operator and $F$ is  $KB$-space, then $T$ and $\vert T\vert$ are $b$-weakly operator.
\item If $\vert T \vert$ is a $b$-weakly compact operator, then
 \begin{equation*}
 T \in  W_{b}(E,F)
 \end{equation*}
 \end{enumerate}
 \end{theorem}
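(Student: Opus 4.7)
The plan is to reduce both assertions to the sequential criterion for b-weak compactness recalled in the introduction (from \cite{10}): an operator $S$ from a Banach lattice into a Banach space is b-weakly compact if and only if $(Sx_n)$ is norm convergent for every positive increasing sequence $(x_n)$ in $B_E$. The common ingredient I would exploit is the pointwise estimate $|Tx|\le|T||x|$, which holds whenever the modulus of $T$ exists. Applied to $x_m-x_n$ with $x_n\le x_m$ in $E^+$, and using the monotonicity of the lattice norm, it yields
\[
\|Tx_m-Tx_n\|\le\bigl\||T|(x_m-x_n)\bigr\|=\bigl\||T|x_m-|T|x_n\bigr\|,
\]
so that norm convergence of $(|T|x_n)$ automatically transfers to $(Tx_n)$.

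For part (1) I would first note that a $KB$-space is Dedekind complete (its norm being order continuous), so the modulus $|T|$ of every order bounded $T\colon E\to F$ exists and is a positive, hence norm-continuous, operator. Given a positive increasing sequence $(x_n)$ in $B_E$, the sequence $(|T|x_n)$ is positive, increasing, and norm bounded by $\||T|\|$. The defining property of a $KB$-space then forces $(|T|x_n)$ to be norm convergent, and the displayed inequality propagates this to $(Tx_n)$; one application of the criterion of \cite{10} to each of $T$ and $|T|$ concludes this part.

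For part (2) I would run the same template without the $KB$-hypothesis: since $|T|$ is assumed b-weakly compact, for every positive increasing $(x_n)\subseteq B_E$ the sequence $(|T|x_n)$ is already norm convergent by \cite{10}, so the displayed inequality shows that $(Tx_n)$ is Cauchy, hence convergent in $F$, and the criterion yields $T\in W_b(E,F)$. An equivalent second route would be to invoke the domination property from \cite{2}: the positive parts $T^{\pm}=\tfrac{1}{2}(|T|\pm T)$ satisfy $0\le T^{\pm}\le|T|$ and are therefore b-weakly compact, so $T=T^{+}-T^{-}$ belongs to $W_b(E,F)$.

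I do not foresee a genuine obstacle here; the single point requiring attention is ensuring the existence of $|T|$ before invoking the pointwise estimate, which is automatic in (1) thanks to the Dedekind completeness forced by the $KB$-hypothesis and is built into the statement in (2).
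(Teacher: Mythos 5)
Your proposal is correct and follows essentially the same route as the paper: both rest on Dedekind completeness of the $KB$-space to produce $\vert T\vert$, the fact that a positive operator into a $KB$-space sends positive increasing norm-bounded sequences to norm-convergent ones, and (for part (2)) the domination property of $W_b$ applied to $0\le T^{\pm}\le \vert T\vert$, which is exactly the paper's argument. The only cosmetic difference is in part (1), where the paper applies the $KB$-argument to $T^{+}$ and $T^{-}$ separately and recombines, while you apply it to $\vert T\vert$ and transfer convergence to $T$ via $\Vert T(x_m-x_n)\Vert\le\Vert\,\vert T\vert(x_m-x_n)\Vert$; both are valid.
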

 \begin{proof}
 \begin{enumerate}
\item Since every $KB$-space has order continuous norm, so $F$ is a Dedekind complete Banach lattice. Then, by Theorem 1.18 from \cite{1}, $|T|$ exists. Now,  let $(x_n)$ be a positive increasing  sequence in $B_E$. By Theorem 4.3 of \cite{1}, $T^+(x_n)$ is a Positive increasing norm bounded sequence in $F$.  Since $F$ is a $KB$-space,   $T^+(x_n)$ is norm convergent. Thus, $T^+$ is $b$-weakly compact. Similarly, $T^-$ is $b$-weakly compact.
It follows from the identities $T=T^++T^-$ and  $|T|=T^++T^-$ that $T$ and $|T|$ are $b$-weakly compact operators, so we are done.
 \item Since $0 \leq T^- , T^{+}\leq \vert T \vert$, then by using Corollary 2.9 from \cite{2}, $T^-$ and $T^+$ are two $b$-weakly compact operators, so $T$ is a $b$-weakly compact operator.
\end{enumerate}  
 \end{proof}

\begin{theorem}
Let $T$ be an order bounded operator from Banach lattice $E$ into Dedekind complete Banach lattice $F$. If $c_0$ dose not embed in $F$, then $T$ and $\vert T \vert$ are $b$-weakly compact operators. 
\end{theorem}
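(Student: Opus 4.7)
My plan is to recognize this statement as an almost immediate corollary of Theorem~\ref{t:2.6}(1), via a classical structural result characterizing $KB$-spaces. The crucial bridge is the well-known equivalence in Banach lattice theory: \emph{a Banach lattice $F$ is a $KB$-space if and only if $F$ contains no subspace isomorphic to $c_0$} (equivalently, $F$ is weakly sequentially complete). This result is standard and can be found, for instance, in Meyer-Nieberg's monograph or in the Aliprantis--Burkinshaw reference \cite{1}.

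First I would apply this characterization to our $F$. By hypothesis $c_0$ does not embed in $F$, so the classical equivalence immediately yields that $F$ is a $KB$-space. In particular $F$ has order continuous norm and, being a $KB$-space, is Dedekind complete, so the modulus $|T|$ of the order bounded operator $T$ exists in $\mathcal{L}(E,F)$ by Theorem~1.18 of \cite{1}. Hence every hypothesis of Theorem~\ref{t:2.6}(1) is met: $T:E\to F$ is order bounded and the codomain $F$ is a $KB$-space.

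Next I would invoke Theorem~\ref{t:2.6}(1) directly. Its conclusion is precisely that both $T$ and $|T|$ are $b$-weakly compact operators, which is the statement to be proved. The only conceptual step in the entire argument is the classical $c_0$-versus-$KB$ dichotomy; once that is cited, the result is immediate and nothing further is needed. Consequently, the principal (and really the only) obstacle is providing a precise reference for the characterization and verifying that the hypotheses required by Theorem~\ref{t:2.6}(1) (in particular Dedekind completeness and the existence of the modulus) are automatically supplied by $F$ being a $KB$-space.
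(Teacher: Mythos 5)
Your proposal is correct, but it takes a genuinely different route from the paper. You reduce the statement to Theorem~\ref{t:2.6}(1) by first invoking the classical equivalence (Theorem~4.60 of \cite{1}) that a Banach lattice contains no copy of $c_0$ if and only if it is a $KB$-space; after that the conclusion for both $T$ and $|T|$ is immediate, with Dedekind completeness and the existence of $|T|$ supplied automatically. The paper uses the same Theorem~4.60 to pass to the $KB$-property, but then gives a direct, self-contained argument instead of citing its own earlier theorem: it first treats positive $T$, applies the disjoint-sequence criterion of Proposition~1 of \cite{4} (norm convergence to zero of $Tx_n$ for every $b$-order bounded disjoint sequence $(x_n)$ in $E^+$), shows the partial sums $s_m=\sum_{n=1}^m Tx_n$ are weakly and hence norm bounded, and uses the $KB$-property to conclude $\|Tx_n\|\to 0$; the general case then follows from $T=T^+-T^-$ and $|T|=T^++T^-$. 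Your reduction is shorter and makes transparent that the theorem is essentially a corollary of Theorem~\ref{t:2.6}(1) (which rests on the increasing-sequence characterization from \cite{10}), while the paper's argument buys a self-contained proof exhibiting the disjoint-sequence technique. Both are valid; just make sure you state the reference for the $c_0$ versus $KB$ dichotomy precisely, as you noted.
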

\begin{proof}
At first, assume that $T$ is a positive operator. By Theorem 4.3 \cite{1}, $T$ is continuous. Thus by Proposition 1 \cite{4}, it suffices to show that $(Tx_n)$ is norm convergent to zero for each $b$-order bounded disjoint sequence $(x_n)$ in $E^+$. Now let $(x_n)$ be a $b$-order bounded and disjoint sequence in $E^+$. It follows that there is a $0\leq x^{\prime\prime}\in E^{\prime\prime}$ such that $0\leq x_n\leq x^{\prime\prime}$     for all $n$.  Then for each $0\leq x^\prime \in E^\prime$, we have 
\begin{align*}
x^\prime (\sum_{i=1}^k x_n)=x^\prime (\bigvee_{i=1}^k x_n)\leq x^{\prime\prime}(x^\prime), \quad\text{for~ all}~ k~ \text{holds.}
\end{align*}
 Hence $x^\prime (\sum_{i=1}^\infty x_n)<\infty$ for all $0\leq x^\prime \in E^\prime$. Then for each $0\leq y^\prime\in F^\prime$ we have $\sum_{i=1}^k y^\prime (T x_n)=\sum_{i=1}^k T^\prime y^\prime (x_n)<+\infty$. 
It follows that the sequence $(s_m=\sum_{n=1}^m Tx_n)$ is weakly bounded, and by Theorem 2.5.5 \cite{Megginson}, it is norm bounded.   Since $c_0$ dose not embed in $F$, by Theorem 4.60 \cite{1}, $F$ is a $KB$-space. Since the sequence $(s_m)$ is positive, increasing  and norm bounded, so it is norm convergent, and so the sequence $(\sum_{n=m}^k Tx_n)$
is norm convergent to zero. It follows that $\lim \Vert Tx_n\Vert=0$. Therefore, $T$ is a $b$-weakly compact operator. Now, let $T$ be an order bounded operator. By the identities $T=T^+-T^-$ and $|T|=T^++T^-$, it follows that $T$ and $|T|$ are $b$-weakly compact operators.
\end{proof} 

\begin{proposition} \label{2.1}
Let $E$ and $F$ be two Banach lattices. Then we have the following assertions.
\begin{enumerate}
\item  If $T$ is a positive and order weakly compact operator from $E$ onto $F$, then $F$ has $\sigma$-order continuous norm.\label{2.1.a}
\item If a positive $b$-weakly compact operator $T:E \rightarrow F$ between Banach lattices is surjective, then the norm of $F$ is $\sigma$-order continuous.
\end{enumerate}
\end{proposition}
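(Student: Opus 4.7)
The plan is to establish (1) first; statement (2) will then follow as an immediate corollary, since every $b$-weakly compact operator is order weakly compact, as recalled in the introduction.

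For (1), I would argue by contradiction. Suppose $F$ does not have $\sigma$-order continuous norm; by the standard characterization, there then exist $y\in F^{+}$ and a disjoint sequence $(y_n)_{n\ge 1}\subset [0,y]$ with $\|y_n\|\ge c>0$ for all $n$. Using the surjectivity of $T$, I pick $x\in E$ with $Tx=y$ and set $u:=x^{+}\in E^{+}$; positivity of $T$ then yields $Tu\ge Tx=y\ge y_n$ for every $n$, so the disjoint sequence $(y_n)$ is dominated in $F^{+}$ by $Tu$.

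The crux of the argument is a lifting step: from the disjoint domination $0\le y_n\le Tu$, I would construct a disjoint sequence $(u_n)\subset [0,u]\subset E^{+}$ for which $\|Tu_n\|$ stays uniformly bounded below, say by $c/2$. The construction combines the open mapping theorem applied to the surjection $T$ (which produces norm-controlled positive preimages $\tilde{u}_n\in E^{+}$ of $y_n$) with the Riesz decomposition property in $E$ (used to trim and redistribute these preimages into pairwise disjoint elements lying below $u$ without letting $\|Tu_n\|$ collapse). Once such $(u_n)$ is in place, it directly contradicts the standard Aliprantis--Burkinshaw characterization of order weak compactness for positive operators, which requires $\|Tv_n\|\to 0$ along every disjoint sequence $(v_n)$ in the positive part of an order interval of $E$; this yields the required contradiction. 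The principal obstacle is precisely this lifting step: without any interval-preservation or lattice-homomorphism hypothesis on $T$, there is no direct way to invert the disjoint structure through $T$, so disjointness in $[0,u]$ has to be reconciled with retention of mass under $T$ by a careful iterative argument, and this is where the surjectivity of $T$ enters in its strongest form through the open mapping theorem.
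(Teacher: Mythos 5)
Your reduction of (2) to (1) is exactly the paper's. For (1), however, you take a genuinely different route from the paper, and your route has a fatal gap at precisely the step you flag as the ``principal obstacle.'' The paper argues directly: given $y_n\downarrow 0$ in $F$, it picks $x_1$ with $Tx_1=y_1$, places $\{y_n\}$ inside the relatively weakly compact set $T([0,x_1])$, extracts a weakly convergent subsequence, and invokes the fact that a decreasing, weakly convergent sequence is norm convergent (\cite[Theorem 3.52]{1}) to get $\|y_n\|\to 0$. Your contradiction argument instead needs to lift a disjoint sequence $(y_n)\subset[0,y]$ with $\|y_n\|\geq c$ back to a disjoint sequence $(u_n)$ in some order interval of $E$ with $\|Tu_n\|$ bounded below, so as to contradict the disjoint-sequence characterization of order weak compactness. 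You do not carry this out, and it cannot be carried out in general.

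Here is a concrete obstruction. Let $(x_n)$ be a dense sequence in the positive part of the closed unit ball of $c$ and define $Q:\ell^1\rightarrow c$ by $Q(a)=\sum_n a_n x_n$. Then $Q$ is positive, and since the closed absolutely convex hull of $\{x_n\}$ contains $\tfrac12 B_c$ (because $x=x^+-x^-$ with $\|x^\pm\|\leq\|x\|$), $Q$ is surjective. Order intervals of $\ell^1$ are norm compact, so $Q$ is order weakly compact; moreover every order bounded disjoint sequence in $(\ell^1)^+$ is norm null. Yet $(e_n)$ is a disjoint sequence in $[0,\mathbf{1}]\subset c$ with $\|e_n\|=1$. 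Thus no disjoint, order bounded sequence $(u_n)$ in $(\ell^1)^+$ can have $\|Qu_n\|$ bounded away from zero: the lifting your argument requires does not exist, and neither the open mapping theorem nor the Riesz decomposition property can produce it. (The same example shows that the inclusion $[0,y_1]\subseteq T([0,x_1])$ asserted in the paper's own proof also fails for a general positive surjection, so the difficulty you ran into is intrinsic to the statement rather than a defect of your particular strategy; but as written your proposal does not prove the proposition.)
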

\begin{proof}
\begin{enumerate}
\item  Assume that $\{y_n\}_n\subseteq F$ with $y_n\downarrow 0$. Since $T$ is surjective, there is an element $x_1\in E$ such that $Tx_1=y_1$. It is clear that $\{y_n\}_n\subseteq [0,y_1]\subseteq T([0,x_1])$. Since $T([0,x_1])$ is relatively  weakly compact, there is a subsequence $\{y_{n_j}\}_j$ of  $\{y_n\}_n$ such that $y_{n_j} \xrightarrow{w} y_0 \in F$. Since $\{y_{n_j}\}_j$ is a decreasing sequence, by  Theorem 3.52 from \cite{1}, $y_{n_j} \xrightarrow{\|\cdot \|} y_0 \in F$.  It follows from  $y_n\downarrow 0$ that \ $y_0=0$. Hence $\|y_n\| \rightarrow 0$.  Thus $F$ has order continuous norm.
\item Follows from (\ref{2.1.a}).
\end{enumerate}
\end{proof}

Now by \cite[Theorem 4.11]{1} we have the following result:
 \vspace{0.2cm}
\begin{corollary} \label{2.3}
 Let $E$ be a normed lattice with order continuous norm.  Then the norm completion of $E$ has order continuous norm.
\end{corollary}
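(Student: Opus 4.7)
The plan is to apply a sequential characterization of order continuous norm from Theorem~4.11 of \cite{1}: namely, that a normed Riesz space has order continuous norm if and only if every norm-bounded increasing sequence in its positive cone is norm Cauchy. This is the sort of property that transfers naturally from a dense sublattice to its completion.

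Let $\widehat{E}$ denote the norm completion of $E$, which is a Banach lattice in which $E$ embeds as a norm-dense Riesz subspace (the lattice operations being uniformly continuous). To verify the above sequential property in $\widehat{E}$, I take a norm-bounded increasing sequence $(\hat{x}_n) \subseteq \widehat{E}^+$ and a given $\varepsilon > 0$, then choose $y_n \in E^+$ with $\|\hat{x}_n - y_n\| < \varepsilon/2^{n+1}$ (possible by density together with the contractive nature of $z \mapsto z^+$). Set $x_n := y_1 \vee \cdots \vee y_n \in E^+$, which is automatically increasing in $E^+$.

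Using the monotonicity identity $\hat{x}_n = \hat{x}_1 \vee \cdots \vee \hat{x}_n$ together with the iterated Riesz-space inequality $|a \vee b - c \vee d| \leq |a - c| + |b - d|$, a routine calculation yields
$$\|x_n - \hat{x}_n\| \;\leq\; \sum_{k=1}^n \|y_k - \hat{x}_k\| \;<\; \varepsilon/2.$$
In particular $(x_n)$ is norm-bounded in $E$, so by Theorem~4.11 of \cite{1} applied in $E$, the sequence $(x_n)$ is norm Cauchy. A triangle inequality then gives $\|\hat{x}_n - \hat{x}_m\| \leq \|x_n - x_m\| + \varepsilon$, whence $\limsup_{n,m \to \infty} \|\hat{x}_n - \hat{x}_m\| \leq \varepsilon$; since $\varepsilon$ was arbitrary, $(\hat{x}_n)$ is norm Cauchy in $\widehat{E}$. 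A second invocation of Theorem~4.11, this time in the reverse direction for $\widehat{E}$, yields the conclusion.

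The only genuine technical point is the control of cumulative error incurred by iterated suprema; this is circumvented by the geometric decay $\|\hat{x}_n - y_n\| < \varepsilon/2^{n+1}$, which keeps the total error below $\varepsilon/2$. I do not anticipate any further obstacle.
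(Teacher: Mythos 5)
Your argument rests on a mischaracterization of order continuity. The property ``every norm-bounded increasing sequence in the positive cone is norm Cauchy'' is not equivalent to order continuity of the norm: it is the $KB$-space property, which is strictly stronger. The space $c_0$ has order continuous norm, yet the partial sums $x_n=\sum_{k=1}^n e_k$ form a norm-bounded increasing sequence in $c_0^+$ with $\|x_n-x_m\|=1$ for $n\neq m$. Consequently your first invocation of Theorem~4.11 --- ``$(x_n)$ is norm-bounded in $E$, so $(x_n)$ is norm Cauchy'' --- fails as stated; and if the argument were correct it would prove that the completion of every normed lattice with order continuous norm is a $KB$-space, which is already false for $E=\widehat{E}=c_0$.

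The actual content of Theorem~4.11 of \cite{1} is that $\widehat{E}$ has order continuous norm if and only if every \emph{order}-bounded increasing sequence $0\le x_n\uparrow\le x$ in $E$ is norm Cauchy. With this corrected hypothesis your density argument no longer closes: given $0\le\hat{x}_n\uparrow\le\hat{x}$ in $\widehat{E}$, the approximants $x_n=y_1\vee\cdots\vee y_n\in E^+$ are increasing and norm-bounded but need not admit an upper bound \emph{in} $E$, so the sequential property of $E$ cannot be applied to them. This is precisely the obstruction that Theorem~4.11 is designed to bypass: one verifies the sequential condition inside $E$ itself rather than attempting a transfer to $\widehat{E}$. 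The paper does exactly that: given $0\le x_n\uparrow\le x$ in $E$, order continuity forces $E$ to be Dedekind complete (Corollary~4.10 of \cite{1}), so $y=\sup_n x_n$ exists, $y-x_n\downarrow 0$ yields $\|y-x_n\|\to 0$, and the triangle inequality shows $(x_n)$ is norm Cauchy; Theorem~4.11 then delivers the conclusion. Your error-control computation with the geometric series is sound as lattice arithmetic, but it is aimed at the wrong statement.
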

\begin{proof}
We prove that  Theorem 4.11 (2)  of  \cite{1} holds. Let $0\leq x_n\uparrow\leq x$ holds in $E$.
It follows from \cite[Corollary 4.10]{1} that $E$ is Dedekind complete.
Put $y=\sup{x_n}$. So, $y-x_n\downarrow 0$ in $E$. Therefore, $\| y-x_n\|\to 0$. We have
$$\|x_n-x_m\|\leq \|x_n-y\|+\| y-x_m\| \to 0,$$
hence $\{x_n\}$ is a norm  Cauchy sequence.
\end{proof}

\vspace{0.2cm}

\begin{proposition} \label{2.5}
Let $E$ and $F$ be two Banach lattices such that $E$ has order unit and $F$ has order continuous norm. Then every order bounded operator $T : E \rightarrow F$ is b-weakly compact.
\end{proposition}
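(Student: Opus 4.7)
The plan is to invoke the characterization of $b$-weakly compact operators from \cite{10} that was recalled in the introduction: $T\in W_b(E,F)$ if and only if $(Tx_n)$ is norm convergent for every positive increasing sequence $(x_n)\subseteq B_E$. So I would fix such a sequence $(x_n)$ and reduce the problem to verifying norm convergence of $(Tx_n)$.

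The crucial structural input I would exploit is that when a Banach lattice $E$ has an order unit $e$, its closed unit ball $B_E$ is order bounded in $E$. I would prove this by a Baire category argument: the sets $A_n=\{x\in E:|x|\leq ne\}$ are norm-closed (because the lattice operations are continuous and $E^+$ is closed) and cover $E$ by definition of an order unit, so some $A_{n_0}$ has nonempty interior. Translating around an interior point $x_0$ and using $|y|\leq|x_0+y|+|x_0|$ produces $r>0$ with $B(0,r)\subseteq A_{2n_0}$, and homogeneity then yields a constant $M>0$ with $B_E\subseteq[-Me,Me]$. In particular $0\leq x_n\leq Me$ for every $n$.

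Since $F$ has order continuous norm, it is Dedekind complete by \cite[Corollary 4.10]{1}, so the modulus of $T$ exists and we may write $T=T^+-T^-$ with $T^{\pm}$ positive. Each sequence $(T^{\pm}x_n)$ is then positive, increasing, and order bounded above by $T^{\pm}(Me)$ in $F$. Dedekind completeness furnishes $y_{\pm}:=\sup_n T^{\pm}(x_n)$, and from $y_{\pm}-T^{\pm}(x_n)\downarrow 0$ together with the order continuity of the norm of $F$ we deduce $\|T^{\pm}(x_n)-y_{\pm}\|\to 0$. Subtracting gives norm convergence of $(Tx_n)$, and the characterization then yields $T\in W_b(E,F)$.

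The main obstacle is the first step, verifying that $B_E$ is order bounded in $E$: this is where the order unit hypothesis is actually used, and it requires invoking Baire category rather than any elementary manipulation. Once that fact is secured, the remainder is a routine monotone-bounded-convergence argument in the Dedekind complete Banach lattice $F$ with order continuous norm.
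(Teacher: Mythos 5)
Your proof is correct, but it follows a genuinely different route from the paper's. The paper works directly with the definition of $b$-weak compactness: a $b$-order bounded set $A$ is norm bounded, hence order bounded in $E$ because of the order unit (the same fact you establish via Baire category, which the paper simply asserts), so $T(A)$ is order bounded in $F$; the paper then concludes at once because order intervals in a Banach lattice with order continuous norm are weakly compact. You instead pass to the sequential characterization of \cite{10}, decompose $T=T^{+}-T^{-}$ using the Dedekind completeness of $F$, and run a monotone-bounded-convergence argument: increasing order bounded sequences in a Dedekind complete lattice with order continuous norm are norm convergent. What the paper's route buys is brevity and no need for the modulus (it quotes the weak compactness of order intervals, a nontrivial equivalence from \cite{1}); what your route buys is that it avoids weak compactness altogether, replacing it by the more elementary interplay of Dedekind completeness and order continuity, at the cost of invoking the Riesz decomposition of $T$ and the characterization theorem. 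Both arguments hinge on the same key point — that the order unit makes $B_E$ (equivalently every norm bounded, hence every $b$-order bounded, set) order bounded in $E$ — and your explicit Baire category justification of that step is a welcome addition, since the paper leaves it unproved.
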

\begin{proof}
Let $A$ be a b-order bounded subset of $E$. Since $E$ has an order unit, $A$ is order bounded in $E$.  Therefore, $T(A)$ is order bounded in $F$. Since $F$ has an order continuous norm, $T(A)$ is  a relatively weakly compact. Thus $T$ is b-weakly compact operator.  
\end{proof}

\begin{example} \label{1.1, 2.7}
 Every order bounded operator from $\ell^ \infty$ into $c_0$ is b-weakly compact.
\end{example}

\begin{theorem}
Let $E$ and $F$ be two Banach lattices where $E$ has order continuous norm. Let $G$ be an order dense sublattice of $E$ and let $T$ be a positive operator from $E$ into $F$.  If $T\in W_b(G,F)$, then $T\in W_b(E,F)$.
\end{theorem}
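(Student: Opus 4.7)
My plan is to apply the sequential characterization of \cite{10} recalled in the introduction: an operator from a Banach lattice into a Banach space is $b$-weakly compact if and only if it sends every positive increasing sequence in the closed unit ball of its domain to a norm-convergent sequence. The positive operator $T:E\to F$ is automatically continuous, so $\|T\|<\infty$ is freely available.

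I would fix an arbitrary positive increasing sequence $(x_n)\subseteq B_E$ and construct a positive increasing sequence $(z_n)\subseteq B_G$ with $\|x_n-z_n\|\to 0$. The starting point is that, by order density of $G$ in $E$, one has $x=\sup\{g\in G : 0\le g\le x\}$ for every $x\in E^+$, the supremum being taken along the upward directed set $[0,x]\cap G$; order continuity of the norm of $E$ then supplies, for any $\varepsilon>0$, some $g\in[0,x]\cap G$ with $\|x-g\|<\varepsilon$. Using this I would pick, for each $n$, an element $y_n\in G^+$ with $y_n\le x_n$ and $\|x_n-y_n\|<1/n$, and then set $z_n:=y_1\vee\cdots\vee y_n$. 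Since $G$ is a sublattice, $z_n\in G^+$; since $y_i\le x_i\le x_n$ for $i\le n$, one has $z_n\le x_n$ and hence $\|z_n\|\le 1$; $(z_n)$ is increasing by construction; and $0\le x_n-z_n\le x_n-y_n$ forces $\|x_n-z_n\|\le 1/n$.

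Now $(z_n)$ is a positive increasing sequence in $B_G$, so the hypothesis $T|_G\in W_b(G,F)$ together with the cited criterion applied to $G$ forces $(Tz_n)$ to converge in norm in $F$. Since $\|Tx_n-Tz_n\|\le \|T\|/n\to 0$, the sequence $(Tx_n)$ is also norm convergent, and a second appeal to the criterion, this time for $E$, yields $T\in W_b(E,F)$. The only real obstacle is arranging $(z_n)$ to be simultaneously positive, increasing, norm-bounded by $1$, close in norm to $(x_n)$, and entirely inside $G$; the finite-supremum device handles the monotonicity once one exploits the sublattice structure of $G$, while the approximation from below is precisely the point at which order density of $G$ and order continuity of the norm of $E$ enter together.
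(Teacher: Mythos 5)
Your proposal is correct and follows essentially the same route as the paper: approximate each $x_n$ from below by elements of $G$ (order density plus order continuity of the norm of $E$), take finite suprema to obtain a positive increasing sequence in $G$, and invoke the sequential characterization of $b$-weak compactness on both $G$ and $E$. Your single-index diagonal selection with the explicit $1/n$ error is in fact a cleaner bookkeeping of the paper's doubly-indexed family $z_{mn}=\vee_{i=1}^n y_{mi}$, but the underlying idea is identical.
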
 
\begin{proof}
 Let $\{x_n\}$ be a positive increasing sequence in $E$ with $\sup_n\Vert x_n \Vert<\infty$. Since $G$ is order dense in $E$, by Theorem 1.34 from \cite{1}, we have
\begin{equation*}
\{y\in G:~0\leq y\leq x_n\}\uparrow x_n,
\end{equation*}
for each $n$. Let $\{y_{mn}\}_{m=1}^\infty\subset G$ with $0\leq y_{mn}\uparrow x_n$ for each $n$. Put $z_{mn}=\vee_{i=1}^n y_{mi}$ and  $0\leq T\in L(E,F)$. It follows that $z_{mn}\uparrow_m x_n$ and $\sup_{m,n}\Vert z_{mn}\Vert\leq \sup_n\| x_n\|<\infty$. Now, if $T\in W_b(G,F)$, then $\{Tz_{mn}\}$ is norm convergent to some point $y\in F$. Now, we have the following inequalities
\begin{equation*}
\|Tx_n-Tz_{mn}\|\leq\|T\|\|x_n-z_{mn}\|\leq \|T\|\|x_n-y_{mn}\|\rightarrow 0.
\end{equation*}
Thus by the following inequality proof holds
 \begin{equation*}
\|Tx_n-y\|\leq\|Tx_n-Tz_{mn}\|+\|Tz_{mn}-y\|.
\end{equation*}
\end{proof} 

\begin{definition}
Let $E$ and $F$ be two vector lattices.  $L^{(1)}_c(E,F)$ (resp. $L^{(2)}_c(E,F)$) is the collection of operators $T\in L_b(E,F)$, which $x_n\xrightarrow{o_1}0$ ($x_n\xrightarrow{o_2}0$) \text{implies} $Tx_{n_k} \xrightarrow{o_1}0$ (resp. $Tx_{n_k} \xrightarrow{o_2}0$)
\text{whenever} $\{x_{n_k}\}$ is a subsequence of $\{x_{n}\}$.
\end{definition}
In \cite{1b}, there are some examples which shows that two classifications of operators $L^{(1)}_c(E,F)$ and $L^{(2)}_c(E,F)$ are different.

\begin{theorem}\label{2.8a}
Let $E$, $F$ be two Banach lattices such that $E$ has order continuous norm. Then
\begin{enumerate}
\item  $W_b(E,F)^+ \subseteq L^{(2)}_c(E,F)$.
\item If $W_b(E,F)$  is vector lattice and  $F$  Dedekind complete, then $W_b(E,F)$ is an order ideal of $ L^{(1)}_c(E,F)= L^{(2)}_c(E,F)$.
\end{enumerate}
\end{theorem}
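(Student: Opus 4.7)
The plan for (1) is to show that a positive $b$-weakly compact $T$ maps $o_2$-null sequences to sequences whose every subsequence is $o_2$-null. I unpack $x_n \xrightarrow{o_2} 0$ as a net $y_\beta \downarrow 0$ in $E$ with $|x_n| \le y_\beta$ eventually, invoke order continuity of the norm of $E$ to conclude $\|y_\beta\| \to 0$, and use continuity of $T$ (automatic for positive operators on a Banach lattice, by Theorem 4.3 of \cite{1}) to get $\|Ty_\beta\| \to 0$. The key small step is promoting the decreasing net $\{Ty_\beta\} \subseteq F^+$ to one whose infimum is actually $0$: any lower bound $z$ satisfies $0 \le z^+ \le Ty_\beta$, hence $\|z^+\| \le \|Ty_\beta\| \to 0$ forces $z^+ = 0$, so $z \le 0$, whence $Ty_\beta \downarrow 0$ in $F$. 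For any subsequence $\{x_{n_k}\}$, the bound $|Tx_{n_k}| \le T|x_{n_k}| \le Ty_\beta$ is valid eventually in $k$ for each $\beta$, which is exactly $Tx_{n_k} \xrightarrow{o_2} 0$.

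For (2), the first task is to prove $L^{(1)}_c(E,F) = L^{(2)}_c(E,F)$ using that both $E$ and $F$ are Dedekind complete ($E$ by Corollary 4.10 of \cite{1}, the same fact underlying Corollary 2.3 above, and $F$ by hypothesis). The implication $o_1 \Rightarrow o_2$ is immediate; for $o_2 \Rightarrow o_1$ on sequences, given $u_n \xrightarrow{o_2} 0$ with witness net $y_\beta \downarrow 0$, fix $\beta_0$ and $n_0$ with $|u_n| \le y_{\beta_0}$ for $n \ge n_0$, define $z_n := \sup_{m \ge n} |u_m|$ for $n \ge n_0$ (which exists by Dedekind completeness) and verify $z_n \downarrow 0$ via the lower-bound argument from part (1); padding the finitely many initial indices produces a decreasing sequence dominating $|u_n|$ for every $n$ with $\inf_n z_n = 0$. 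Applying this in both $E$ and $F$ yields the equality of the two operator classes.

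With this equality in hand, the inclusion $W_b(E,F) \subseteq L^{(1)}_c(E,F)$ follows from the vector lattice hypothesis: each $T \in W_b(E,F)$ splits as $T^+ - T^-$ with $T^\pm \in W_b(E,F)^+$, and part (1) places both summands in $L^{(2)}_c(E,F) = L^{(1)}_c(E,F)$. For the ideal property, take $T \in L^{(1)}_c(E,F)$ with $|T| \le |S|$ for some $S \in W_b(E,F)$; the lattice hypothesis on $W_b(E,F)$ delivers $|S| \in W_b(E,F)$, and the chain $0 \le T^\pm \le |T| \le |S|$ combined with the domination property for $b$-weakly compact operators (Corollary 2.9 of \cite{2}) yields $T^\pm \in W_b(E,F)$, so $T = T^+ - T^- \in W_b(E,F)$.

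The main obstacle is the equality $L^{(1)}_c = L^{(2)}_c$: the direction $o_2 \Rightarrow o_1$ for sequences in a Dedekind complete vector lattice is intuitively clear, but the bookkeeping for the initial segment, since $o_2$ supplies bounds that hold only eventually rather than for every index, has to be handled with a padding argument on both the domain and range sides. The remainder is a clean assembly of part (1), the vector lattice hypothesis, and the domination property of $W_b$.
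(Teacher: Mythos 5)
Your proof is correct, but part (1) follows a genuinely different route from the paper's. The paper uses the $b$-weak compactness of $T$ in an essential-looking way: it notes that $x_n\xrightarrow{o_2}0$ forces $\|x_n\|\to 0$ (order continuity of the norm of $E$), extracts a subsequence with summable norms, forms the increasing norm-bounded partial sums $y_m=\sum_{j\le m}x_{n_j}$, applies $b$-weak compactness to get norm convergence of $\{Ty_m\}$, and then invokes a Gao--Xanthos extraction result to pass to a strongly order convergent subsequence before telescoping. You instead push the witness net $y_\beta\downarrow 0$ through $T$ directly: order continuity of $\|\cdot\|_E$ gives $\|y_\beta\|\to 0$, norm continuity of the positive operator $T$ gives $\|Ty_\beta\|\to 0$, and your lower-bound argument upgrades the decreasing positive net $\{Ty_\beta\}$ to $Ty_\beta\downarrow 0$, after which $|Tx_{n_k}|\le T|x_{n_k}|\le Ty_\beta$ eventually. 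This never uses $b$-weak compactness, so it actually establishes the stronger inclusion $L(E,F)^+\subseteq L^{(2)}_c(E,F)$ (in effect, that every positive operator out of a Banach lattice with order continuous norm is order continuous), and it gives $o_2$-convergence of the \emph{whole} image sequence rather than of a subsequence, which covers either reading of the definition of $L^{(2)}_c$. In part (2) you also replace the paper's citation of Theorem 1.7 of Abramovich--Sirotkin for $L^{(1)}_c(E,F)=L^{(2)}_c(E,F)$ by a direct proof that sequential $o_1$- and $o_2$-convergence agree in Dedekind complete lattices (via $z_n=\sup_{m\ge n}|u_m|$ with the padding of initial indices); the remaining assembly --- the decomposition $T=T^+-T^-$ under the vector-lattice hypothesis and the domination property of $W_b(E,F)$ --- coincides with the paper's. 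What your version buys is a self-contained and more elementary argument that exposes the hypothesis of $b$-weak compactness in part (1) as superfluous; what the paper's version buys is staying within the $b$-weakly compact toolbox and the cited literature.
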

\begin{proof}
\begin{enumerate}
\item Let $T$ be a positive $b$-weakly compact operator and let $\{x_n\}\subset E$ be a strongly order convergence sequence in $E$.
Without lose of generality, we set $0\leq x_n\xrightarrow{o_2} 0$, which follows $\{x_n\}$ is norm convergent to zero. Set $\{x_{n_j}\}$ as subsequence with $\sum_{k=1}^{+\infty}\Vert x_{n_j} \Vert< +\infty$. Define $y_m=\sum_{j=1}^m  x_{n_j}$. Then $0\leq y_m\uparrow$ and $\sup_m\Vert y_m\Vert<\infty$. Since $T$ is $b$-weakly compact operator, $\{Ty_m\}$ is norm convergent to some point $z\in F$. Now by \cite{9g}, page 7, it has a subsequence as $\{Ty_{m_k}\}$ which  is strongly order convergent to $z\in F$. Thus there is $\{z_\beta\}\subset F^+$ and that for each $\beta$ there exists $n_0$ $\vert Ty_{m_k}-z\vert\leq z_\beta\downarrow 0$  whenever  $k\geq n_0$. If we set $k\geq k' \geq n_0$, then  we have the following inequalities
\begin{align*}
0&\leq Tx_{n_{m_k}}\leq \vert Ty_{m_k}-Ty_{m_{k'}}\vert\\
&\leq\vert Ty_{m_k}-z\vert +\vert Ty_{m_{k'}}-z\vert\leq z_\beta+z_\beta\downarrow 0, 
\end{align*}
which shows that $T\in L^{(2)}_c(E,F)$ and proof immediately follows. 
 \item By equality $T=T^+-T^-$ and Theorem 1.7 from \cite{1b}, we have $ L^{(2)}_c(E,F)=L^{(1)}_c(E,F)$.
Since $W_b(E,F)$ is a vector lattice, it follows from part (1) that $W_b(E,F)$ is a subspace of $ L^{(1)}_c(E,F)$. Now proof follows from the fact that $W_b(E,F)$ satisfies the domination property.
\end{enumerate}
\end{proof}
By using conditions of Theorem \ref{2.8a}, we can design the following question.\\
\noindent {\bf Question.}  Is  $W_b(E,F)$ a band in $ L^{(1)}_c(E,F)=L^{(2)}_c(E,F)$?

\begin{proposition} \label{2.6}
Let $E$ and $F$ be two Banach lattices such that $F$ is a $KB$-space.
Then every bounded operator $T : E \rightarrow F$ is b-weakly compact.
\end{proposition}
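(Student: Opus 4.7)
My plan is to invoke the sequential characterization of $b$-weak compactness from \cite{10} (recalled in the introduction): an operator $T : E \to F$ is $b$-weakly compact precisely when $(Tx_n)$ is norm convergent for every positive increasing sequence $(x_n)$ in $B_E$. Thus the task reduces to fixing such an $(x_n)$ and proving that $(Tx_n)$ converges in norm.

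The key idea is to pass to the telescoping differences $y_n := x_{n+1} - x_n \geq 0$ and show that the series $\sum_n y_n$ is weakly unconditionally Cauchy (WUC) in $E$. For any choice of signs $(\epsilon_n) \subset \{-1,1\}$ and any $N$, the lattice inequality
\[
\Bigl| \sum_{n=1}^{N} \epsilon_n y_n \Bigr| \;\leq\; \sum_{n=1}^{N} y_n \;=\; x_{N+1} - x_1 \;\leq\; x_{N+1},
\]
combined with monotonicity of the lattice norm, yields $\|\sum_{n=1}^{N} \epsilon_n y_n\| \leq 1$. Since $T$ is linear and norm bounded, $\sum_n Ty_n$ is therefore WUC in $F$. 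At this point I use the KB-space hypothesis: by Theorem 4.60 of \cite{1}, $F$ is weakly sequentially complete and hence contains no isomorphic copy of $c_0$. The classical Bessaga--Pelczynski theorem then ensures that every WUC series in $F$ is unconditionally convergent; in particular $\sum_n Ty_n$ converges in norm, and since its $N$-th partial sum is $Tx_{N+1} - Tx_1$, the sequence $(Tx_n)$ converges in norm, completing the proof.

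The step I expect to be most delicate is that $T$ is only assumed norm bounded, so $(Tx_n)$ need not be monotone, positive, or order bounded in $F$ and the KB property cannot be applied to it directly, as it was in Theorem \ref{t:2.6}(1). The WUC detour circumvents this difficulty by only using the Banach-space structure of $F$ via Bessaga--Pelczynski, with the KB hypothesis entering solely to exclude copies of $c_0$. In contrast with the order-bounded case, where writing $T = T^+ - T^-$ immediately reduces everything to positive operators, here the lattice information is used globally through the no-$c_0$-copy characterization rather than pointwise on the images.
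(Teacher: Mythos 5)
Your proof is correct, but it follows a genuinely different route from the paper's. The paper reduces (via Proposition 1 of \cite{4}) to showing that $(Tx_n)$ converges for every $b$-order bounded increasing sequence in $E^+$, and then invokes the factorization theorem for operators into $KB$-spaces (\cite[Theorem 4.63]{1}): $T = S\circ R$ where $R:E\to G$ is a lattice homomorphism into a $KB$-space $G$. Positivity of $R$ carries the increasing norm-bounded sequence into $G^+$, the $KB$ property of $G$ gives norm convergence there, and continuity of $S$ finishes. You instead telescope the sequence, observe that $\bigl|\sum_{n=1}^N \epsilon_n y_n\bigr|\leq x_{N+1}$ forces the series $\sum_n y_n$ to be weakly unconditionally Cauchy, push it forward by the bounded operator $T$, and then use the Bessaga--Pelczynski $c_0$ theorem together with the characterization of $KB$-spaces as the Banach lattices containing no copy of $c_0$ (\cite[Theorem 4.60]{1}). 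Both arguments are sound; yours trades the factorization machinery for a classical Banach-space theorem and makes transparent that the only role of the $KB$ hypothesis is to exclude $c_0$, while the paper's version stays entirely inside lattice-theoretic tools and is shorter once the factorization theorem is granted. One small point: what is needed from \cite[Theorem 4.60]{1} is precisely that a $KB$-space contains no isomorphic copy of $c_0$; routing this through weak sequential completeness is an unnecessary (though harmless) intermediate step.
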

\begin{proof}
By using  \cite[Proposition 1]{4},  it is enough to show that $\{Tx_n\}_n$ is norm convergent for each b-order bounded increasing sequence $\{x_n\}_n$ in $E^+$. Let $\{x_n\}_n$ be a b-order bounded increasing sequence in $E^+$. Since $F$ is a $KB$-space, by \cite[Theorem 4.63]{1}, there exists a  $KB$-space $G$, a lattice homomorphism $R : E \rightarrow G$ and an operator $S : G \rightarrow F$ such that $T=S\circ R$. Since $R$ is a lattice homomorphism,  $R$ is a positive operator and therefore  is b-order bounded. Then $R(x_n)$ is a b-order bounded increasing sequence in $G$. Since $G$ is a $KB$-space, $R(x_n)$ is norm convergent in $G$. It follows that  $S\circ R(x_n)$ is also norm convergent in $F$. Hence $\{T(x_n)\}$ is norm convergent in $F$. This completes  the proof.
\end{proof}

\begin{example} \label{2.7}
 Let $E$ be a Banach lattice. Then every bounded operator from $E$ into $\ell^1$ is b-weakly compact.
\end{example}

\vspace{0.2cm}
In the following proposition,  we show that each Dunford-Pettis operator is b-weakly compact,  the converse is not always true. In fact, the identity operator of the Banach lattice $\ell^2$ is b-weakly compact, but it is not Dunford-Pettis.
Recall that if $E$ is a Banach lattice and if $0\leqslant{ x^{\prime\prime}} \in{ E^{\prime\prime}}$, then the principal ideal $I_{x^{\prime\prime}}$ generated by  $x^{\prime\prime} \in E^{\prime\prime}$ under the norm ${\|\cdot\|}_\infty$ defined by
$$\| y^{\prime\prime}\|_\infty =\inf \{\lambda >0: ~| y^{\prime\prime}| \leq \lambda x^{\prime\prime}\},\      y^{\prime\prime} \in {I_{x^{\prime\prime}}},$$
is an $AM$-space with unit $x^{\prime\prime}$, which its closed unit ball coincides with the order interval $[-x^{\prime\prime},x^{\prime\prime}]$.

\begin{lemma} \label{2.8}
Let $E$ be a Banach lattice. Then every b-order bounded disjoint sequence in $E$ is weakly convergent to zero.
\end{lemma}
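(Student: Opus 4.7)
The plan is to exploit the fact that b-order boundedness gives us a single dominating element $0\le x''\in E''$ with $|x_n|\le x''$ for every $n$, and then use the disjointness of $(x_n)$ to turn finite sums into finite suprema. First, I would observe that since the lattice operations of $E$ coincide with those inherited from $E''$, the sequence $(|x_n|)$ is still pairwise disjoint in $E''$, and for any $n$,
\begin{equation*}
\sum_{k=1}^{n}|x_k|=\bigvee_{k=1}^{n}|x_k|\le x''.
\end{equation*}
This is the key lattice identity that turns a disjointness condition into an order condition controlled by $x''$.

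Next, I would test against an arbitrary positive functional $0\le f\in E'$. Evaluating at $f$ and using $|x_k|\le x''$ together with the identity above yields
\begin{equation*}
\sum_{k=1}^{n}f(|x_k|)=f\!\left(\bigvee_{k=1}^{n}|x_k|\right)\le\langle x'',f\rangle<\infty.
\end{equation*}
Since $n$ is arbitrary, the series $\sum_{k=1}^{\infty}f(|x_k|)$ converges, which forces $f(|x_k|)\to 0$. Because $|f(x_k)|\le f(|x_k|)$, we get $f(x_k)\to 0$ for every positive $f\in E'$.

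Finally, since $E'$ is a Banach lattice, every $f\in E'$ admits the Jordan decomposition $f=f^{+}-f^{-}$ with $f^{\pm}\in(E')^{+}$, so $f(x_k)=f^{+}(x_k)-f^{-}(x_k)\to 0$ by the previous step applied to $f^{+}$ and $f^{-}$ separately. This gives $x_n\xrightarrow{w}0$, completing the proof.

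There is no real obstacle here; the only point that needs a moment's care is justifying the equality $\sum_{k=1}^{n}|x_k|=\bigvee_{k=1}^{n}|x_k|$ for pairwise disjoint positive elements (an immediate induction from $a\wedge b=0\Rightarrow a+b=a\vee b$) and noting that this identity is preserved under the embedding $E\hookrightarrow E''$, so that the domination $\bigvee_{k=1}^{n}|x_k|\le x''$ can be read off inside $E''$.
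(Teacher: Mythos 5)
Your proof is correct, but it takes a genuinely different route from the paper's. The paper localizes: it forms $Y=I_{x''}\cap E$ with the order-unit norm $\|\cdot\|_\infty$ generated by $x''$, notes that $(Y,\|\cdot\|_\infty)$ is an $AM$-space so that $Y'$ is an $AL$-space with order continuous norm, and then cites the characterization of dual order continuity via disjoint sequences (Meyer--Nieberg, Theorem 2.4.14) to get $x_n\xrightarrow{w}0$ in $Y$, hence in $E$. You instead argue from first principles: the identity $\sum_{k=1}^{n}|x_k|=\bigvee_{k=1}^{n}|x_k|\le x''$ for pairwise disjoint elements converts disjointness into the convergence of the positive series $\sum_k f(|x_k|)$ for each $0\le f\in E'$, and the Jordan decomposition $f=f^+-f^-$ in $E'$ finishes the argument. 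Your version is more elementary and self-contained; it sidesteps the completeness of $(Y,\|\cdot\|_\infty)$ and the (implicit in the paper) step that weak convergence in $Y$ passes to weak convergence in $E$. The only point requiring care in your approach is that the canonical embedding $E\hookrightarrow E''$ is a lattice homomorphism, so that the domination $\bigvee_{k=1}^{n}|x_k|\le x''$ and the pairing $f\bigl(\sum_{k=1}^{n}|x_k|\bigr)\le\langle x'',f\rangle$ can legitimately be read in $E''$ -- and you flag exactly this, so nothing is missing.
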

\begin{proof}
Let $\{x_n\}_n$ be a disjoint sequence in $E$ such that $\{x_n\}_n \subseteq{[-x^{\prime\prime},x^{\prime\prime}]}$ for some $x^{\prime\prime} \in E^{\prime\prime}$. Let $Y=I_{x^{\prime\prime}} \cap E$ and equip $Y$ with the order unit norm ${\|\cdot\|}_\infty$ generated by $x^{\prime\prime}$. The space $(Y,{\|\cdot\|}_\infty)$ is an $AM$-space, so,  $Y^\prime$ is an $AL$-space and hence its norm is order continuous.  Now,  by   \cite[Theorem 2.4.14]{11} we see that $x_n\xrightarrow{w} 0$.
\end{proof}

\begin{proposition}\label{2.9}
Every Dunford-Pettis operator from a Banach lattice $E$ into a Banach space $X$ is b-weakly compact.
\end{proposition}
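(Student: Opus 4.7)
The plan is to reduce the problem to the disjoint-sequence criterion for $b$-weak compactness due to Alpay and Altın (Proposition 1 of \cite{4}), which states that a norm continuous operator $S:E\to X$ belongs to $W_b(E,X)$ if and only if $\Vert Sx_n\Vert\to 0$ for every $b$-order bounded disjoint sequence $(x_n)\subset E^+$. This criterion was already invoked in the second theorem of the paper, so I may cite it directly.

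First I would note that every Dunford--Pettis operator is automatically bounded: norm-null sequences are weakly null, so the defining implication forces $T$ to be sequentially norm continuous on $E$, and for a linear map on a Banach space this is equivalent to continuity. Hence $T$ is eligible for the Alpay--Altın criterion. Next I would take an arbitrary $b$-order bounded disjoint sequence $(x_n)$ in $E^+$ and apply Lemma \ref{2.8}, which yields $x_n\xrightarrow{w}0$. Since $T$ is Dunford--Pettis, the weak-null hypothesis propagates to norm convergence: $\Vert Tx_n\Vert\to 0$. Plugging this into the Alpay--Altın criterion closes the proof.

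There is essentially no obstacle here once Lemma \ref{2.8} is in hand; the whole content of the proposition is that the Dunford--Pettis property converts the weak-null behaviour of $b$-order bounded disjoint sequences (a purely order-theoretic fact coming from the AM-space structure of the principal ideal $I_{x''}\subset E''$) into the norm-null behaviour required for $b$-weak compactness. The only care needed is to make sure the citation of \cite{4} is applied to a continuous operator, which is why the first sentence recording boundedness of $T$ is necessary.
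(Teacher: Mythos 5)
Your proposal is correct and follows essentially the same route as the paper's own proof: reduce to the disjoint-sequence criterion of \cite[Proposition 1]{4}, invoke Lemma \ref{2.8} to see that every $b$-order bounded disjoint sequence in $E^+$ is weakly null, and let the Dunford--Pettis property upgrade this to norm convergence. The only addition is your explicit remark that a Dunford--Pettis operator is automatically bounded, which the paper leaves implicit.
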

\begin{proof}
Let $T$ be a Dunford-Pettis operator from a Banach lattice $E$ into a Banach space $X$. By  \cite[Proposition 1]{4}, it is enough to show that  $\{Tx_n\}_n$ is norm convergent to zero for each b-order bounded disjoint sequence  $\{x_n\}_n$ in $E^+$.  Let  $\{x_n\}_n$ be a b-order bounded disjoint sequence in $E^+$.  As the canonical embedding of $E$ into $E^{\prime\prime}$ is a lattice homomorphism, $\{x_n\}_n$ is an order bounded disjoint sequence in $E^{\prime\prime}$. Thus by preceding lemma, $\{x_n\}_n$ is $\sigma(E, E^{\prime})$ convergent to zero in $E$. Now,  since $T$ is Dunford-Pettis, $\{Tx_n\}_n$ is norm convergent to zero. This completes  the proof.
\end{proof}
\vspace{0.3cm}
As a consequence of  \cite[Theorem 5.82]{1},  \cite[Theorem 2.2]{8} and  \cite[Theorem 2.3]{10}, we  have the following results.

\begin{corollary} \label{2.10}
Let $E$ be a Banach lattice and let $X$ be a Banach space. Then each b-weakly compact operator from $E$ into $X$ is Dunford-Pettis, if one of the following assertions is valid:
\begin{enumerate}
\item $E$ is an $AM$-space.
\item  The norm of $E^\prime$ is order continuous and $E$ has the Dunford-Pettis property (i.e. each weakly compact operator from a Banach space $E$ into another $F$ is Dunford-Pettis).
\end{enumerate}
\end{corollary}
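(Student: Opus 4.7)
The plan is to reduce both assertions to the fact that, under each hypothesis, a $b$-weakly compact operator is automatically weakly compact; once that is in hand, the Dunford-Pettis conclusion follows from standard theorems.

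For part (1), I would first argue that if $E$ is an $AM$-space, then its topological bidual $E^{\prime\prime}$ is an $AM$-space with an order unit. Consequently every norm bounded subset $A\subseteq E$ sits inside an order interval of $E^{\prime\prime}$, i.e.\ every norm bounded subset of $E$ is automatically $b$-order bounded. Thus a $b$-weakly compact operator $T:E\to X$ sends the closed unit ball $B_E$ to a relatively weakly compact set, so $T$ is weakly compact. Now I would invoke \cite[Theorem 5.82]{1}, which says that every weakly compact operator from an $AM$-space into a Banach space is Dunford-Pettis, to conclude.

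For part (2), the argument is shorter. Assume the norm of $E^\prime$ is order continuous. Then by \cite[Theorem 2.2]{8} (or equivalently \cite[Theorem 2.3]{10}), every $b$-weakly compact operator $T:E\to X$ is weakly compact. Since $E$ has the Dunford-Pettis property, every weakly compact operator from $E$ into any Banach space is Dunford-Pettis, and the conclusion is immediate.

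The only delicate step is the observation in (1) that the bidual of an $AM$-space carries an order unit; once that is recognised, the two parts are essentially bookkeeping of the cited theorems. No extra lemma needs to be proved, since every ingredient (the AM-space/bidual fact, \cite[Theorem 5.82]{1}, \cite[Theorem 2.2]{8}, and \cite[Theorem 2.3]{10}) is available from the references already listed in the paper.
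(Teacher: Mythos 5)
Your proposal is correct and, in substance, is the argument the paper intends: the paper offers no written proof, merely declaring the corollary a consequence of \cite[Theorem 5.82]{1}, \cite[Theorem 2.2]{8} and \cite[Theorem 2.3]{10}, and your reduction ``$b$-weakly compact $\Rightarrow$ weakly compact $\Rightarrow$ Dunford--Pettis'' is exactly the intended chain. The only (harmless and arguably cleaner) deviation is in part (1): you get weak compactness from the fact that $E^{\prime\prime}$, being the dual of the $AL$-space $E^{\prime}$, is an $AM$-space with order unit, so that $B_E$ is already $b$-order bounded, whereas the paper's citations suggest instead noting that the $AL$-space $E^{\prime}$ has order continuous norm and invoking \cite[Theorem 2.2]{8} as in part (2); both routes are standard and correct, and both finish with \cite[Theorem 5.82]{1}.
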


For the next  results we need the following lemma:

\begin{lemma} \label{2.15}
\begin{enumerate}
\item  If an operator $T$ from a Banach space $X$ into a Banach space $Y$ is compact and $T(X)$ is closed, then $T(X)$ is finite-dimensional.  As a consequence,  if $T:X\rightarrow Y$ is a surjective compact operator between Banach spaces, then $Y$ is finite-dimensional.
\item  If $T:X\rightarrow Y$ is a weakly compact operator between Banach spaces and $T(X)$ is closed, then $T(X)$ is reflexive.  As a consequence,  if $T:X\rightarrow Y$ is a surjective weakly compact operator between Banach spaces,  then $Y$ is reflexive.
   \end{enumerate}
\end{lemma}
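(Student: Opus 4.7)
The plan is to treat both parts in parallel by reducing to the surjective case: once I show that a surjective compact operator between Banach spaces has finite-dimensional range, and a surjective weakly compact operator between Banach spaces has reflexive range, both parts of the lemma follow because the hypothesis that $T(X)$ is closed in $Y$ makes $T(X)$ into a Banach space in its own right, and the corestriction $T_{0}\colon X\to T(X)$ is a surjective bounded operator that inherits compactness or weak compactness from $T$.

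The engine for both surjective statements is the open mapping theorem: applied to $T_{0}$, it produces $r>0$ with $r B_{T(X)}\subseteq T_{0}(B_{X})=T(B_{X})$. In the compact case, $T(B_{X})$ is relatively compact in $Y$, and since $T(X)$ is norm-closed in $Y$ the closure of $T(B_{X})$ lies inside $T(X)$ and is compact there as well. Hence $r B_{T(X)}$ is contained in a compact subset of $T(X)$, so $B_{T(X)}$ itself is relatively compact, and Riesz's lemma forces $\dim T(X)<\infty$. The stated consequence is the special case $T(X)=Y$.

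For the weakly compact case, the same inclusion $r B_{T(X)}\subseteq T(B_{X})$ shows that $B_{T(X)}$ sits inside the relatively weakly compact set $T(B_{X})$. Because $B_{T(X)}$ is norm-closed and convex it is weakly closed by Mazur's theorem, and since $T(X)$ itself is weakly closed in $Y$ this weak closedness passes down to the subspace weak topology of $T(X)$. Consequently $B_{T(X)}$ is a weakly closed subset of a weakly compact set, hence weakly compact, and Kakutani's theorem yields the reflexivity of $T(X)$. Again the corollary follows by specializing to $T(X)=Y$.

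The main technical point to be careful about is the compatibility between the weak topology on the Banach subspace $T(X)$ and the subspace topology induced by the weak topology of $Y$; these agree by Hahn-Banach, because every continuous linear functional on $T(X)$ extends to one on $Y$. With this identification in hand neither part is deep---each rests on a clean application of the open mapping theorem combined with either Riesz's lemma or Kakutani's theorem---so the only genuine work is the bookkeeping that transfers (weak) compactness of $T(B_{X})$ from $Y$ down to the closed subspace $T(X)$.
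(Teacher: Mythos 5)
Your proposal is correct and follows essentially the same route as the paper: use the open mapping theorem (for the corestriction onto the closed range) to place a ball of $T(X)$ inside the relatively (weakly) compact set $T(B_X)$, then conclude finite-dimensionality via Riesz's characterization of locally compact normed spaces in part (1) and reflexivity via Kakutani's weak-compactness criterion in part (2). Your extra care about the agreement of the weak topology of $T(X)$ with the subspace weak topology of $Y$ is a point the paper leaves implicit, but the argument is the same.
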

\begin{proof}
\begin{enumerate}
\item  Let $T:X\rightarrow Y$ be a compact operator between Banach spaces. Since $\overline{T(X)}=T(X)$, $T(X)$ is a Banach space. If $U$ denotes the open ball of $X$, then $T(U)$ is an open set in $T(X)$. On the other hand, $\overline{T(U)}$ is compact so, $T(X)$ is locally compact and hence $T(X)$ is finite-dimensional.
\item  Let $T:X\rightarrow Y$ be a weakly compact operator between Banach spaces. Since $T(X)$ is closed, $T(X)$ is a Banach space and from equality $T(X)=\bigcup_{n\in\mathbb{N}}nT(B_X)$, we see that $T(B_X)$ contains a closed ball of $T(X)$. On the other hand, $\overline{T(B_X)}$ is weakly compact, so, that closed ball is weakly compact, therefore $T(X)$ is reflexive.
\end{enumerate}
\end{proof}

 \begin{proposition}\label{2.16}
Let $E$ be a Banach lattice and let $X$ be a non-reflexive Banach space. If $T:E\rightarrow X$ is a surjective b-weakly compact operator, then the norm of $E^\prime$ is not order continuous.
\end{proposition}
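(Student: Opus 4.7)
The plan is to argue by contraposition, so I would assume that the norm of $E^\prime$ is order continuous and derive that $X$ must then be reflexive, contradicting the hypothesis.

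The first step uses the remark made earlier in the paper (based on \cite[Theorem 2.2]{8}), which states that if the norm of $E^\prime$ is order continuous, then every $b$-weakly compact operator from $E$ into any Banach space is in fact weakly compact. Applying this to our hypothesized $T$, I conclude that $T:E\rightarrow X$ is weakly compact.

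Next, because $T$ is surjective, its range $T(E)=X$ is trivially closed in $X$. I would then invoke Lemma \ref{2.15}(2), which says that a weakly compact operator between Banach spaces whose range is closed has reflexive range; equivalently, a surjective weakly compact operator maps onto a reflexive space. This forces $X$ to be reflexive, contradicting the standing assumption that $X$ is non-reflexive. Therefore the norm of $E^\prime$ cannot be order continuous.

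I do not anticipate any real obstacle here: the argument is simply a chaining of the order-continuity criterion for weak compactness of $b$-weakly compact operators with the reflexivity lemma (Lemma \ref{2.15}(2)) applied to the surjective case. The only thing to be careful about is citing the right earlier statement (the consequence of \cite[Theorem 2.2]{8} mentioned in the introductory paragraph of Section 2) and making sure the surjectivity of $T$ is used exactly at the point where we need $T(E)=X$ to be a closed subspace so that Lemma \ref{2.15}(2) applies.
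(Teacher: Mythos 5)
Your argument matches the paper's proof exactly: assume order continuity of the norm on $E^\prime$, invoke \cite[Theorem 2.2]{8} to upgrade $T$ to a weakly compact operator, and apply Lemma \ref{2.15}(2) to the surjective (hence closed-range) $T$ to conclude $X$ is reflexive, a contradiction. No issues.
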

\begin{proof}
If the norm of $E^\prime$ is order continuous then by using  \cite[Theorem 2.2]{8} and Lemma  \ref{2.15},  $X$ is reflexive which is a contradiction.
\end{proof}

\begin{proposition} \label{2.17}
Let $E$, $F$ be two Banach lattices and $F^\prime$ has order continuous norm. Suppose that $T:E \rightarrow F$ is an almost interval preserving, injective and b-weakly compact operator which has a closed range. Then $E$ is reflexive.  
\end{proposition}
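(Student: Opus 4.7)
The plan is to prove that $T$ is a Banach lattice isomorphism from $E$ onto a closed reflexive ideal of $F$, which forces $E$ to be reflexive.

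Since $T$ is injective with closed range, the open mapping theorem shows that $T:E\to T(E)$ is a topological isomorphism of Banach spaces. I then verify that $T(E)$ is a closed ideal of $F$: if $y\in F$ satisfies $|y|\le|Tu|$ for some $u\in E$, then $|y|\le T|u|$ since $T\ge 0$; by the almost interval preserving hypothesis, $|y|\in\overline{T[0,|u|]}\subseteq\overline{T(E)}=T(E)$, and applying the same argument to $y^+$ and $y^-$ places $y\in T(E)$.

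Next, I upgrade $T$ to a lattice isomorphism onto $T(E)$. The key step is to identify the positive cone of the ideal $T(E)$ with $T(E^+)$: whenever $Tv\ge 0$ one has $Tv\in[0,T|v|]=\overline{T[0,|v|]}$, so $Tv=\lim Tu_n$ with $0\le u_n\le|v|$, and since $T^{-1}$ is continuous on $T(E)$, $u_n\to v$, forcing $v\ge 0$. Thus $T$ and $T^{-1}$ are both positive, and the fact that $T(E)$ is an ideal (so lattice operations in $T(E)$ agree with those in $F$) allows me to conclude that $T$ preserves lattice operations.

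Now I invoke the dual hypothesis. Since $T(E)$ is a closed ideal of $F$, its annihilator $T(E)^\perp$ is a band of $F'$; because $F'$ has order continuous norm this band is a projection band, so $(T(E))'\cong F'/T(E)^\perp$ is lattice isomorphic to the complementary band, which is itself a closed ideal of $F'$ and therefore carries an order continuous norm. Transporting along the Banach lattice isomorphism $T$, it follows that $E'$ has order continuous norm. By \cite[Theorem~2.2]{8}, the $b$-weakly compact operator $T$ is then weakly compact, so Lemma~\ref{2.15}(2) yields that its closed range $T(E)$ is reflexive; since $E\cong T(E)$, $E$ is reflexive. The main obstacle is the second step, where I extract the strong conclusions that $T(E)$ is an ideal and that $T$ is a lattice homomorphism from the soft hypothesis of almost interval preservation by combining it with the continuity of $T^{-1}$ on the closed range.
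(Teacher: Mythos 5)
Your proof is correct, but it reaches the key intermediate fact (order continuity of the norm of $E'$) by a genuinely different route than the paper. The paper works entirely on the dual side: it passes to the bijective adjoint $T_1':T(E)'\to E'$, invokes the theorem that the adjoint of an almost interval preserving operator is a lattice homomorphism (so $T'$ and $(T')^{-1}$ are positive), notes that $F'$ with order continuous norm is a $KB$-space so that $T'$ is automatically a $b$-weakly compact surjection onto $E'$, and then applies its Proposition \ref{2.1}(2) to conclude that $E'$ has order continuous norm; finally it applies \cite[Theorem 2.2]{8} and Lemma \ref{2.15}(2) to $T'$ to get $E'$, hence $E$, reflexive. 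You instead stay on the primal side: you use the continuity of $T^{-1}$ on the closed range, together with almost interval preservation, to show directly that $T(E)$ is a closed ideal of $F$ and that $T$ is a lattice isomorphism onto it, and you then transport order continuity from the projection band $\bigl(T(E)^{\perp}\bigr)^{d}\cong T(E)'\cong E'$; the finish via \cite[Theorem 2.2]{8} and Lemma \ref{2.15}(2) (applied to $T$ rather than $T'$) is the same. Your version is more self-contained --- it replaces the citation of Meyer-Nieberg's Theorem 1.4.19 and the paper's Proposition \ref{2.1} with elementary arguments, and as a by-product it identifies $E$ with a concrete closed ideal of $F$ --- at the cost of a longer verification that $T(E)$ is an ideal and that $T^{-1}$ is positive; the paper's dual-side argument is shorter but leans on heavier cited machinery and is stated more loosely (e.g.\ the step ``$T'$ is $b$-weakly compact'' implicitly uses that every continuous operator defined on a $KB$-space is $b$-weakly compact). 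Both arguments are sound.
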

\begin{proof}
 Since $T(E)$ is closed, $T(E)$ is a Banach space, so, $T_1 :E\rightarrow T(E)$ is a bijective operator between two Banach spaces. Then $T_1^\prime :T(E)^\prime \rightarrow E^\prime$ is bijective. Without lose generality we replace $T^\prime$ with $T_1^\prime$.  On the other hand, since $T$ is an almost interval preserving, by Theorem 1.4.19 \cite{11}, $T^\prime$ is a lattice homomorphism, and so by Theorem 2.15 \cite{1}, $T^\prime$ and ${(T')^{-1}}$ are both positive operators. 
  Since $F^\prime$ has order continuous norm, by \cite[Theorem 4.59]{1}, $F^\prime$ is a $KB$-space, and so  $T^\prime$ is b-weakly compact operator and  the norm of $E^\prime$ is order continuous. Since $T$ is a  b-weakly compact, by  \cite[Theorem 2.2]{8}, $T$ is weakly compact. So,  $T^\prime$ is weakly compact. Now, by Lemma \ref{2.15}, $E^\prime$ is reflexive and then $E$ is reflexive. This completes  the proof.  
\end{proof}
Recall that a nonzero element $x$ of a Banach lattice $E$ is discrete if the order ideal generated by $x$ is equal to the subspace generated by $x$. The vector lattice $E$ is discrete if it admits a complete disjoint system of discrete elements. For example the Banach lattice $\ell^2$ is discrete but $L^1[0,1]$ is not.

\begin{proposition}\label{2.18}
Let $E$ be a Banach lattice and let $X$ be a Banach space and let $T:E\rightarrow X$ be an injective b-weakly compact operator which its range is closed.  If one the following conditions holds, then $E$ is finite dimensional.
\begin{enumerate}
\item $E$ is an $AM$-space with order continuous norm.
\item $E$ is an $AM$-space and $E^\prime$ is discrete.
\end{enumerate}  
\end{proposition}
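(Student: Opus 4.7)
The plan is to reduce finite-dimensionality of $E$ to the fact that every reflexive $AM$-space is finite-dimensional. Both conditions (1) and (2) include the hypothesis that $E$ is an $AM$-space; the remainder of each condition specializes the setting without altering the core argument, so I will use only that $E$ is an $AM$-space (noting that the extra hypotheses would provide additional structure if needed).

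First, since $T$ is injective with closed range, the open mapping theorem gives that $T:E\rightarrow T(E)$ is a Banach space isomorphism, so $E$ is isomorphic to $T(E)$. Because $E$ is an $AM$-space, by the classical Kakutani duality its dual $E'$ is an $AL$-space, and every $AL$-space has order continuous norm. Thus by \cite[Theorem 2.2]{8} (as already applied in Proposition \ref{2.16}), the $b$-weakly compact operator $T$ is in fact weakly compact. Lemma \ref{2.15}(2) now yields that $T(E)$ is reflexive, and transporting along the isomorphism above, $E$ itself is reflexive.

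The remaining step---showing that a reflexive $AM$-space must be finite-dimensional---is the main obstacle. One clean route is to observe that the dual $E'$ is then a reflexive $AL$-space; but every infinite-dimensional $AL$-space contains an isomorphic copy of $\ell^1$ (via any infinite disjoint normalized positive sequence, whose closed linear span is isometric to $\ell^1$ by the additivity of the norm on disjoint positives), and $\ell^1$ is not reflexive. Hence $E'$, and therefore $E$, is finite-dimensional. Alternatively, one can invoke Corollary \ref{2.10} to obtain that $T$ is Dunford--Pettis, factor $I_E = T^{-1}\circ T$ using the isomorphism from the first step, and conclude that $E$ has the Schur property; combined with reflexivity, this collapses the weak and norm topologies on $B_E$ and again forces $E$ to be finite-dimensional.
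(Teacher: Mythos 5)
Your argument is correct, but it is genuinely different from the one in the paper. The paper's proof invokes \cite[Proposition 2.3]{10}, which under either hypothesis (1) or (2) upgrades the $b$-weakly compact operator $T$ all the way to a \emph{compact} operator; it then notes (as in the proof of Proposition \ref{2.17}) that $T^\prime:X^\prime\rightarrow E^\prime$ is surjective and compact, and applies part (1) of Lemma \ref{2.15} to conclude that $E^\prime$, hence $E$, is finite dimensional. You instead stay at the level of weak compactness: since $E$ is an $AM$-space, $E^\prime$ is an $AL$-space with order continuous norm, so \cite[Theorem 2.2]{8} makes $T$ weakly compact; Lemma \ref{2.15}(2) together with the isomorphism $E\cong T(E)$ (open mapping theorem) gives reflexivity of $E$; and you finish with the standard fact that a reflexive $AM$-space is finite dimensional (an infinite-dimensional $AL$-space contains an isometric copy of $\ell^1$ spanned by a disjoint normalized positive sequence, so its reflexive dual cannot be infinite dimensional). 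What each approach buys: the paper's route is shorter modulo the cited compactness result, while yours avoids that citation entirely and in fact proves a formally stronger statement, since you never use the extra hypotheses in (1) and (2) beyond ``$E$ is an $AM$-space'' --- the order continuity of the norm of $E$, respectively the discreteness of $E^\prime$, play no role in your argument. The only ingredient you use that is not already in the paper is the (standard, but worth citing, e.g.\ from \cite{1} or \cite{11}) fact that every infinite-dimensional $AL$-space, or equivalently every infinite-dimensional $AM$-space, contains a lattice copy of $\ell^1$, respectively $c_0$, and hence cannot be reflexive. Your alternative ending via Corollary \ref{2.10} and the Schur property is also sound, again using only the $AM$ hypothesis.
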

\begin{proof}
According to the proof of Proposition \ref{2.17}, $T^\prime:X^\prime \rightarrow E^\prime$ is a surjective operator. By \cite[Proposition 2.3]{10}, if one of the above conditions holds, then $T$ is a compact operator. Thus, $T^\prime$ is compact. Now, by Lemma \ref{2.15}, $E^\prime$ is finite dimensional. Hence, $E$ is finite dimensional. 
\end{proof}

\begin{definition}
An operator $T:E\rightarrow F$ between two normed vector lattices is unbounded $b$-weakly compact if $\{Tx_n\}$ is $un$-convergent for every positive increasing sequence $\{x_n\}_n$ in the closed unit ball $B_E$ of $E$.
\end{definition}
For normed vector lattices $E$ and $F$, the collection of unbounded $b$-weakly compact operators will be denoted by  $UW_b(E,F)$.
 If a Banach lattice $F$ has strong unit, by using Theorem 2.3  \cite{10a}, we have   $W_b(E,F)=UW_b(E,F)$.  
It is obvious that every $b$-weakly compact operator is unbounded $b$-weakly compact, but the following example shows that  the converse does not hold, in general.
   
\begin{example}\label{Ex:12}
Let $I_{c_0}$ be the identity mapping from $c_0$ into itself.  Then $I_{c_0}$ is an  unbounded $b$-weakly compact operator.  But $I_{c_0}$ is not a $b$-weakly compact operator.
\end{example}

The following characterization is obvious.
\begin{proposition}
Let $E$ and $F$ be two normed vector lattices and let $T$ be an operator from $E$ into $F$. Then the following are equivalent:
\begin{enumerate}
\item $T$ is unbounded $b$-weakly compact.
\item $\{Tx_n\}$ is norm convergent for each $b$-order bounded increasing sequence $\{x_n\}$ in $E^+$.
\end{enumerate}
\end{proposition}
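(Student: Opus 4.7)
The plan is to prove both implications by a simple rescaling argument that bridges the $B_E$-formulation in the definition of unbounded $b$-weakly compactness with the $b$-order bounded formulation in $(2)$. The only auxiliary ingredient required is the observation that for a positive increasing sequence in a Banach lattice, norm boundedness and $b$-order boundedness coincide.

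First I would record this auxiliary fact. If $0\le x_n\le x''$ holds in $E''$, then $\|x_n\|=\|x_n\|_{E''}\le\|x''\|$, so $\{x_n\}$ is norm bounded. Conversely, if $\{x_n\}\subset E^{+}$ is positive, increasing, and norm bounded, then it lies in a weak$^{\ast}$-compact ball of $E''$; passing to a weak$^{\ast}$-cluster point $x''\in E''$, one sees that for each $0\le\varphi\in E'$ and each fixed $m$ one has $\varphi(x_m)\le\varphi(x_{n_\alpha})\to x''(\varphi)$ along the cluster subnet, so $x_m\le x''$ in $E''$ for every $m$. Hence $\{x_n\}$ is order bounded in $E''$, i.e., $b$-order bounded in~$E$.

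For $(1)\Rightarrow(2)$, I would take a $b$-order bounded increasing sequence $\{x_n\}\subset E^{+}$, set $M:=\sup_n\|x_n\|$ (finite by the auxiliary fact; one may assume $M>0$), and observe that $\{x_n/M\}\subset B_E$ is positive and increasing. Applying $(1)$ to this rescaled sequence yields un-convergence of $\{T(x_n/M)\}$, which by linearity of $T$ and the positive homogeneity of un-convergence transfers to $\{Tx_n\}$. For $(2)\Rightarrow(1)$, any positive increasing sequence in $B_E$ is norm bounded and hence $b$-order bounded by the auxiliary equivalence, so $(2)$ applies directly to give the required convergence of $\{Tx_n\}$.

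The only step that demands any genuine argument is the auxiliary equivalence between norm boundedness and $b$-order boundedness for positive increasing sequences, which reduces to a standard weak$^{\ast}$-compactness argument in~$E''$. I therefore anticipate no real obstacle, in line with the paper's own remark that the characterization is obvious.
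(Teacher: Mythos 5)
Your auxiliary fact (a positive increasing sequence in a Banach lattice is norm bounded if and only if it is $b$-order bounded) is correct, and the weak$^{*}$-cluster-point argument for the nontrivial direction is sound. The paper offers no proof of its own, dismissing the proposition as obvious, so there is nothing to compare against on that front. The direction $(2)\Rightarrow(1)$ is also fine: a positive increasing sequence in $B_E$ is $b$-order bounded by your auxiliary fact, $(2)$ then gives norm convergence of $\{Tx_n\}$, and norm convergence implies $un$-convergence since $\||Tx_n-y|\wedge u\|\leq\|Tx_n-y\|$.

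The gap is in $(1)\Rightarrow(2)$. After rescaling, hypothesis $(1)$ hands you only $un$-convergence of $\{Tx_n\}$, whereas $(2)$ as printed demands \emph{norm} convergence, and you pass from one to the other without comment. Unbounded norm convergence is strictly weaker than norm convergence (the standard unit vectors of $c_0$ are $un$-null but not norm null), so this step does not follow from what you have established. Indeed, with $(2)$ read literally, your own rescaling observation shows that $(2)$ is exactly the known characterization of $b$-weak compactness (Proposition 1 of \cite{4}, Theorem 2.3 of \cite{10}), so the proposition would assert $UW_b(E,F)=W_b(E,F)$ for all normed lattices, contradicting the paper's Example~\ref{Ex:12}. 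Almost certainly item $(2)$ is intended to read ``$un$-convergent'' rather than ``norm convergent''; under that reading your rescaling argument together with the auxiliary fact constitutes a complete and correct proof. But for the statement as written, your argument does not, and in general cannot, bridge the difference between $un$-convergence and norm convergence.
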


It is easy to see that the class of unbounded $b$-weakly compact operators satisfies the domination property.
\begin{proposition}
Let $E$ and $F$ be two normed vector lattices and let $S$ and $T$ be two operators from $E$ into $F$ with $0\leq S\leq T$. If $T$ is unbounded $b$-weakly compact then $S$ is likewise unbounded $b$-weakly compact.
\end{proposition}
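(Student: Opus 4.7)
The plan is to invoke the characterization stated in the previous proposition, which asserts that an operator $T$ is unbounded $b$-weakly compact if and only if $\{Tx_n\}$ is norm convergent for every $b$-order bounded increasing sequence $\{x_n\}$ in $E^+$. Once we have this reformulation, the domination statement reduces to a standard Cauchy-sequence argument driven by monotonicity of the Banach lattice norm.

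First, I would fix an arbitrary $b$-order bounded increasing sequence $\{x_n\}\subset E^+$. Applying the characterization to $T$, the sequence $\{Tx_n\}$ is norm convergent in $F$, hence norm Cauchy. The key inequality is then obtained by exploiting positivity: for any indices $m\geq n$, the element $x_m-x_n$ belongs to $E^+$, so the hypothesis $0\leq S\leq T$ yields
\begin{equation*}
0\leq Sx_m-Sx_n=S(x_m-x_n)\leq T(x_m-x_n)=Tx_m-Tx_n.
\end{equation*}
Because the norm on the Banach lattice $F$ is monotone on the positive cone, this gives $\|Sx_m-Sx_n\|\leq \|Tx_m-Tx_n\|$, whence $\{Sx_n\}$ inherits the Cauchy property from $\{Tx_n\}$. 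Completeness of $F$ then delivers a norm limit, and the characterization applied in the reverse direction concludes that $S$ is unbounded $b$-weakly compact.

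There is no real obstacle here: the proof is a textbook application of the domination hypothesis combined with monotonicity of the lattice norm. The only point worth stating carefully is the justification of the middle inequality, namely that the positivity of $x_m-x_n$ (for $m\geq n$) is what lets the operator inequality $0\leq S\leq T$ transfer to the pointwise inequality on the differences. Everything else is standard.
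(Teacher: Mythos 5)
The paper offers no argument for this proposition at all---it is stated without proof, prefaced only by ``it is easy to see''---so there is nothing to compare your write-up against and it must stand on its own. Your template (for $m\geq n$ use $0\leq Sx_m-Sx_n=S(x_m-x_n)\leq T(x_m-x_n)=Tx_m-Tx_n$ together with monotonicity of the lattice norm to transfer the Cauchy property from $\{Tx_n\}$ to $\{Sx_n\}$) is the right one for a domination result, and the inequality you single out is correct. The difficulty lies elsewhere. The characterization you invoke cannot be taken at face value: if unbounded $b$-weak compactness were equivalent to \emph{norm} convergence of $\{Tx_n\}$ for every $b$-order bounded increasing sequence in $E^+$, then by \cite[Theorem 2.3]{10} the class $UW_b(E,F)$ would coincide with $W_b(E,F)$, contradicting Example \ref{Ex:12}; to be consistent with the definition, that characterization has to be read with $un$-convergence in place of norm convergence.

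Once the relevant convergence is unbounded norm convergence, your inequality still yields that $\{Sx_n\}$ is $un$-Cauchy, since $0\leq(Sx_m-Sx_n)\wedge u\leq(Tx_m-Tx_n)\wedge u$ for every $u\in F^+$ and the norm is monotone. But the closing step ``Cauchy plus completeness of $F$ delivers a limit'' is exactly where the argument breaks: the $un$-topology of a Banach lattice is not complete in general---in $c_0$ the increasing sequence $x_n=\sum_{k=1}^{n}e_k$ is $un$-Cauchy, yet any $un$-limit $x$ would have to satisfy $|1-x_k|\wedge u_k=0$ for all $k$ and all $u\in c_0^+$, forcing $x=(1,1,\dots)\notin c_0$---and, moreover, $F$ is only assumed to be a \emph{normed} vector lattice, so even norm-Cauchy sequences need not converge. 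To complete the proof you must exhibit the $un$-limit of $\{Sx_n\}$ rather than appeal to completeness; that is the missing idea, and it is not supplied by the paper either.
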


\begin{theorem}
Let $E$, $F$ be two Banach lattices and let $T\in L(E,F)$. If for an ideal $I$  of $E$ the restriction $T\vert_I:I\rightarrow F$ is a surjective homomorphism  which is also a $b$-weakly compact operator, then $T\in UW_b(E,F)$.  
\end{theorem}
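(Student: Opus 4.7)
The plan is to reduce the problem to an earlier result. Specifically, I claim that the hypothesis forces $F$ to be a $KB$-space; once this is established, Proposition~\ref{2.6} gives that every bounded operator from $E$ into $F$ is $b$-weakly compact, and since every $b$-weakly compact operator is unbounded $b$-weakly compact (as noted just before Example~\ref{Ex:12}), the conclusion $T \in UW_b(E,F)$ follows at once from $T \in L(E,F)$.

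Since $T\vert_I$ is a lattice homomorphism, it is positive; by hypothesis it is also surjective and $b$-weakly compact. Hence Proposition~\ref{2.1}(2) yields that the norm of $F$ is $\sigma$-order continuous. It remains to verify the Levi property: every positive increasing norm-bounded sequence $\{w_n\} \subset F^+$ must be norm convergent.

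To establish this, I would exploit that a surjective lattice homomorphism is interval preserving: for each $n$, lift $w_n$ to some $z_n \in I^+$ with $T\vert_I(z_n) = w_n$, and form the finite joins $\tilde z_n := z_1 \vee \cdots \vee z_n \in I^+$. The sequence $\{\tilde z_n\}$ is positive and increasing in $I$, and since $T\vert_I$ preserves finite suprema and $\{w_n\}$ is already increasing, $T\vert_I(\tilde z_n) = w_1 \vee \cdots \vee w_n = w_n$. Provided $\{\tilde z_n\}$ can be arranged to be $b$-order bounded in $I$, the $b$-weak compactness of $T\vert_I$ forces $\{w_n\} = \{T\vert_I(\tilde z_n)\}$ to be norm convergent in $F$, establishing the Levi property and hence that $F$ is $KB$.

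The main obstacle is securing $b$-order boundedness of the sequence $\{\tilde z_n\}$, since the open mapping theorem only furnishes individually norm-bounded lifts whose joins can have unbounded norm. The remedy I would pursue is to first produce a dominating upper bound $w_\infty \in F$ for $\{w_n\}$ (via a weak$^*$-compactness argument in $F^{\prime\prime}$ combined with the $\sigma$-order continuity of $F$ to pull the candidate back into $F$), then lift $w_\infty$ to a single $z_\infty \in I^+$, and finally apply interval preservation of $T\vert_I$ to select each $z_n$ inside $[0, z_\infty]$. This confines $\{\tilde z_n\} \subset [0, z_\infty]$, delivering the required order boundedness and closing the argument.
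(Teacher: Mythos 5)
Your reduction to Proposition~\ref{2.6} hinges on the claim that $F$ must be a $KB$-space, and the step you yourself flag as the main obstacle is exactly where the argument breaks down. To get the Levi property you propose to extract an upper bound $w_\infty\in F$ for an arbitrary positive increasing norm-bounded sequence $\{w_n\}$ by taking a weak$^*$ cluster point in $F''$ and using $\sigma$-order continuity of the norm of $F$ to ``pull the candidate back into $F$.'' But $\sigma$-order continuity only makes $F$ an ideal in $F''$, not a band: the weak$^*$ limit of an increasing norm-bounded sequence is its supremum computed in $F''$ and in general does not belong to $F$. The sequence $s_n=e_1+\cdots+e_n$ in $c_0$ (which has order continuous norm) is increasing and norm-bounded, yet its weak$^*$ limit is $\mathbf{1}\in\ell^\infty\setminus c_0$. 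In fact ``every increasing norm-bounded sequence in $F^+$ has an upper bound in $F$'' together with order continuity of the norm \emph{is} the $KB$ property, so this step is circular. There is also a structural warning sign: your route would prove $T\in W_b(E,F)$, strictly stronger than the asserted $T\in UW_b(E,F)$, and the paper's Example~\ref{Ex:12} shows these classes genuinely differ; a theorem whose conclusion is only $un$-convergence should make you suspicious of any argument that forces all of $F$ to be $KB$.

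The paper avoids any global claim about $F$ and works locally with truncations, which is what the $un$-topology is designed for: given a positive increasing norm-bounded sequence $\{x_n\}$ in $E$ and any $x\in I^+$, the truncates $x_n\wedge x$ lie in the ideal $I$, are increasing and norm-bounded, so $b$-weak compactness of $T\vert_I$ makes $\{T(x_n\wedge x)\}=\{Tx_n\wedge Tx\}$ norm convergent; surjectivity of the lattice homomorphism $T\vert_I$ then gives convergence of $Tx_n\wedge y$ for every $y\in F^+$, which is the $un$-convergence required. If you want to salvage your approach you would need an independent proof that a $b$-weakly compact surjective lattice homomorphism forces its codomain to be $KB$; as written, that claim is unsubstantiated and the dominating-bound construction does not supply it.
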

\begin{proof}
Let $\{x_n\}$ be a positive increasing sequence in $E$ with $\sup_n\Vert x_n \Vert<\infty$ and let $x\in I$. Then $ x_n\wedge x\in I$, $ 0\leq x_n\wedge x\uparrow$ and $\sup_n\Vert x_n\wedge x \Vert\leq\Vert x_n\Vert<\infty$. Since $T\in W_b(I,F)$,  $\{T(x_{n}\wedge x)\}$  is convergent for each $x\in I$. As $T$ is homomorphism and surjective,  $\{T(x_{n})\wedge y\}$ is convergent for all $y\in F$ and proof follows.
\end{proof}

\begin{theorem}
Let $E$ and $F$ be two Banach lattices and let $T: E\rightarrow F$ be a surjective homomorphism. Then by one of the following conditions we have $T\in UW_b(E,F)$.
\begin{enumerate}
\item $E$ is a $KB$-space.
\item $F$ has order continuous norm.
\end{enumerate} 
\end{theorem}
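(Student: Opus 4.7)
The plan is to handle the two hypotheses separately; case (1) reduces almost mechanically to continuity of $T$, while case (2) requires a passage to the bidual of $F$.

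For part (1), assume $E$ is a $KB$-space. Given any positive increasing sequence $\{x_n\}\subset B_E$, the defining property of a $KB$-space forces $\{x_n\}$ to converge in norm to some $x\in E$. Since $T$, being a lattice homomorphism, is positive and hence continuous, $\{Tx_n\}$ converges to $Tx$ in norm; and norm convergence trivially entails $un$-convergence because $\||Tx_n-Tx|\wedge u\|\leq\|Tx_n-Tx\|$ for every $u\in F^+$.

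For part (2), assume $F$ has order continuous norm. Let $\{x_n\}$ be a positive increasing sequence in $B_E$; then $\{Tx_n\}$ is positive, increasing, and norm bounded by $\|T\|$ in $F$. I would embed $F$ canonically in its bidual $F^{\prime\prime}$. Because $F$ has order continuous norm, $F$ is an order ideal of $F^{\prime\prime}$, and the positive, increasing, norm bounded sequence $\{Tx_n\}$ admits a supremum $y^{\prime\prime}=\sup_n Tx_n$ in the Dedekind complete lattice $F^{\prime\prime}$, with $y^{\prime\prime}-Tx_n\downarrow 0$ there. Fixing $u\in F^+$, the truncations $(y^{\prime\prime}-Tx_n)\wedge u$ are dominated by $u\in F$ and therefore lie in $F$ by the ideal property; they decrease to $0$ in $F^{\prime\prime}$, and since $0\in F$ they decrease to $0$ in $F$ as well. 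Order continuity of the norm on $F$ then forces $\|(y^{\prime\prime}-Tx_n)\wedge u\|\to 0$. Combined with $0\leq Tx_n-Tx_m\leq y^{\prime\prime}-Tx_m$ for $n\geq m$, this yields $|Tx_n-Tx_m|\wedge u\leq(y^{\prime\prime}-Tx_m)\wedge u$, so $\{Tx_n\}$ is $un$-Cauchy in $F$.

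The subtle point is reconciling this conclusion with the paper's notion of $un$-convergence: the computation above delivers only the $un$-Cauchy property, but this is precisely the sense exhibited in Example 3.3 (where $e_1+\cdots+e_n$ is $un$-Cauchy in $c_0$ without admitting an $un$-limit inside $c_0$). When $y^{\prime\prime}$ happens to lie in $F$---for instance whenever $F$ is itself a $KB$-space---the same computation shows that $\{Tx_n\}$ $un$-converges to $y^{\prime\prime}$ in $F$. Note that the argument uses only the positivity of $T$; the surjectivity and homomorphism hypotheses are not actually invoked in this route.
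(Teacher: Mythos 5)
Your part (1) is exactly the paper's argument (the paper simply notes that a $KB$-space domain forces $T\in W_b(E,F)$, which is what you spell out via norm convergence of $\{x_n\}$ and continuity of the positive operator $T$). Your part (2), however, takes a genuinely different route. The paper exploits the surjective lattice homomorphism structure: it truncates in the domain, setting $y_n=x_n\wedge x$, observes $Ty_n\uparrow\leq Tx$, invokes Theorem 4.11 of Aliprantis--Burkinshaw (order continuity of the norm of $F$ makes order bounded increasing sequences norm Cauchy) to get norm convergence of $Ty_n=Tx_n\wedge Tx$, and then uses surjectivity to conclude that $Tx_n\wedge y$ converges for every $y\in F^+$. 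You instead work entirely in the codomain: you pass to $F''$, take $y''=\sup_n Tx_n$ there (this needs the monotone completeness of the dual Banach lattice $F''$, not merely its Dedekind completeness --- worth saying explicitly), use that order continuity makes $F$ an ideal of $F''$ so that the truncations $(y''-Tx_n)\wedge u$ live in $F$ and decrease to $0$ there, and conclude $\|(y''-Tx_n)\wedge u\|\to 0$, hence the $un$-Cauchy property. Your argument is correct and strictly more general: as you note, it uses only positivity of $T$, whereas the paper's argument genuinely needs $T$ to be a surjective lattice homomorphism in order to identify $Tx_n\wedge Tx$ with $T(x_n\wedge x)$ and to realize every $y\in F^+$ as $Tx$ with $x\in E^+$.

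The caveat you raise --- that both computations deliver only a $un$-Cauchy condition (equivalently, norm convergence of the truncations $Tx_n\wedge u$), not the existence of a $un$-limit in $F$ as the paper's literal definition of $un$-convergence demands --- is real, but it is not a defect of your proof relative to the paper's: the paper's proof stops at exactly the same point (``$Tx_n\wedge y$ is norm convergent for each $y\in F$'') and never produces a limit either. Indeed the paper's own example asserting that the identity on $c_0$ is unbounded $b$-weakly compact is only true under the $un$-Cauchy reading, since $e_1+\cdots+e_n$ admits no $un$-limit in $c_0$. So your proof establishes the same conclusion the authors intend, by a cleaner and more general argument, and you have correctly identified an imprecision in the definition rather than a gap in your own reasoning.
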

\begin{proof}
If $E$ is a $KB$-space, then $T\in W_b(E,F)$ and proof follows.\\
Assume that $F$ has order continuous norm. Let $\{x_n\}\subset E^+$ be an increasing sequence such that $\sup\Vert x_n\Vert<\infty$ 
and let $x\in E^+$. Set $y_n=x_n\wedge x$, which follows that $y_n\uparrow\leq x$ and $\sup_n\Vert y_n\Vert\leq \Vert  x \Vert$. Since $T$ is lattice homomorphism, $T$ is positive, which follows  $Ty_n\uparrow\leq Tx$. By using Theorem 4.11 \cite{1}, $\{Ty_n\}$ is norm Cauchy, and so is norm convergence in $F$. On the other hand, $T(x_n\wedge x)=Tx_n\wedge Tx$. Therefore, $Tx_n\wedge y$ is norm convergent for each $y\in F$.
\end{proof}

\end{document}